\numberwithin{equation}{section}
\newtheorem{theorem}[equation]{Theorem}
\theoremstyle{definition}
\theoremstyle{remark}
\theoremstyle{plain}
\newtheorem{lemma}[equation]{Lemma}
\newtheorem{proposition}[equation]{Proposition}
\newcommand{\C}{\mathbb{C}}
\newcommand{\diag}{\text{diag}}
\newcommand{\bq}{/\!\!/}
\begin{document}

\title{The classification of $SU(2)^2$ biquotients of rank 3 Lie groups}

\author{Jason DeVito and Robert L. DeYeso III}

\date{}

\maketitle

\begin{abstract}

We classify all compact simply connected biquotients of the form $G\bq SU(2)^2$ for $G =SU(4), SO(7), Spin(7)$, or $G = \mathbf{G}_2\times SU(2)$.  In particular, we show there are precisely $2$ inhomogeneous reduced biquotients in the first and last case, and $10$ in the middle cases.

\end{abstract}

\section{Introduction}\label{sec:intro}

Geometrically, a biquotient is any manifold diffeomorphic to the quotient of a Riemannian homogeneous space $G/H$ by a free isometric action of a subgroup $K\subseteq \operatorname{Iso}(G/H)$; the resulting quotient is denoted $K\backslash G/H$.  Biquotients also have a purely Lie theoretic description:  if $ U$ is a compact Lie group, then any $f=(f_1,f_2):U\rightarrow G\times G$ defines an action of $U$ on $G$ via $u\ast g= f_1(u) \, g\, f_2(u)^{-1}$.  If this action is free, the resulting quotient $G\bq U$ is called a biquotient.

In general, biquotients are not even homotopy equivalent to homogeneous spaces.  Nevertheless, one may often compute their geometry and topology, making them a prime source of examples.

If $G$ is a compact Lie group equipped with a bi-invariant metric, then $U$ acts isometrically, and thus $G\bq U$ inherits a metric.  By O'Neill's formula \cite{On1}, the resulting metric on $G\bq U$ has non-negative sectional curvature.  In addition, until the recent example of Grove, Verdiani and Ziller \cite{GVZ}, and independently Dearicott \cite{De}, all known examples of positively curved manifolds were constructed as biquotients \cite{AW, Ber,Es1, Baz1,Wa}.  Further, almost all known examples of quasi-positively curved and almost positively curved manifolds are constructed as biquotients.  See \cite{DDRW,D1,Ta1,KT,EK,Ke1,Ke2,PW2,W,Wi} for these examples.

Biquotients were first discovered by Gromoll and Meyer \cite{GrMe1} when the exhibited an exotic sphere as a biquotient.

In his Habilitation, Eschenburg \cite{Es2} classified all biquotients $G\bq U$ with $G$ compact simply connected of rank $2$.  We partially extend his classification when $G$ has rank $3$.  Using the well known classification of simple Lie groups together with the low dimensional exceptional isomorphisms, one easily sees that, up to cover, the complete list of rank $3$ semi-simple Lie groups is $SU(2)^3$, $\mathbf{G}_2\times SU(2)$, $SU(3)\times SU(2)$, $Sp(2)\times SU(2)$, $Sp(3)$, $SU(4)$, and $SO(7)$.

In his thesis, the first author \cite{D1} classified all compact simply connected biquotients of dimension at most $7$; these include all examples of the form $G_1\times SU(2)/SU(2)^2$ where $G_1$ has rank $2$, except $G_1 =\mathbf{G}_2$.  In addition, the authors, together with Ruddy and Wesner \cite{DDRW}, have classified all biquotients of the form $Sp(3) \bq SU(2)^2$.

\begin{theorem}\label{main}  For $U = SU(2)^2$, $G = SU(4), SO(7)$, or $\mathbf{G}_2\times SU(2)$,  there are, respectively, precisely $2$, $10$ and $2$ reduced inhomogeneous biquotients of the form $G\bq U$.  Table \ref{table:su2class} lists them all.  Further, a biquotient action of $U$ on $Spin(7)$ is effectively free iff it is the lift of an effectively free biquotient action of $U$ on $SO(7)$.

\end{theorem}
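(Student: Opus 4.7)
I would prove Theorem~\ref{main} in three stages: enumeration of candidate homomorphisms, the freeness and reducedness analysis, and the comparison between $SO(7)$ and $Spin(7)$.

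A biquotient action of $U=SU(2)^2$ on $G$ is given by a homomorphism $f=(f_1,f_2):U\to G\times G$, and two such are equivalent as biquotients when they differ by conjugation on each factor, by post-composition with an outer automorphism of $G$, by pre-composition with an automorphism of $U$ (the swap of the two $SU(2)$'s together with the Weyl involutions $w\mapsto -w$), or by swapping and inverting the two $G$-factors. The first stage is to enumerate homomorphisms $U\to G$ up to these equivalences. For $G=SU(4)$, such homomorphisms correspond to $4$-dimensional unitary representations of $SU(2)^2$ and are parameterized by short tuples of highest weights; for $G=SO(7)$ the analogous list is of $7$-dimensional orthogonal representations; for $G=\mathbf{G}_2\times SU(2)$ the second factor is essentially trivial and the first appeals to the Dynkin classification of $A_1$-subgroups of $\mathbf{G}_2$ together with the computation of centralizers to assemble commuting pairs. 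In each case the list becomes finite once one bounds representation degrees by $\dim G$, and after folding in all equivalences it is manageable.

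For each assembled pair $(f_1,f_2)$ I would apply Eschenburg's freeness criterion: the action is effectively free if and only if, for every $u\in U$ outside the kernel of the action, $f_1(u)$ is not conjugate in $G$ to $f_2(u)$. Since we only test up to conjugacy, it suffices to restrict $u$ to a maximal torus of $U$ and compare Weyl-chamber representatives in $G$, a character-level computation. I would then discard the non-reduced examples (those for which a positive-dimensional subgroup of $U$ acts trivially, so that the biquotient is really one of lower rank) and the homogeneous ones, arriving at Table~\ref{table:su2class}. I expect the main obstacle to live here, particularly in the $SO(7)$ case, where the interaction of spin and vector summands creates many sub-cases and careful bookkeeping is required to avoid both duplicates and omissions.

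For the $Spin(7)$ assertion, since $U$ is simply connected every homomorphism $\bar f:U\to SO(7)\times SO(7)$ lifts uniquely to $f:U\to Spin(7)\times Spin(7)$, and conversely every homomorphism to the latter descends; this gives a bijective correspondence of biquotient actions. Because $Z(SO(7))$ is trivial while $Z(Spin(7))=\{\pm 1\}$, one implication is immediate: if $\bar f$ is effectively free on $SO(7)$ and $f_1(u)\,g\,f_2(u)^{-1}=g$ in $Spin(7)$, then projecting places $u$ in the $SO(7)$-kernel, forcing $f_1(u),f_2(u)\in\{\pm 1\}$ and then $f_1(u)=f_2(u)$, so $u$ lies in the $Spin(7)$-kernel. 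For the converse one assumes effective freeness on $Spin(7)$ and considers $u$ stabilizing some $\bar g\in SO(7)$: lifting yields $f_1(u)\,g\,f_2(u)^{-1}=\epsilon g$ with $\epsilon\in\{\pm 1\}$, the $+$ case being handled as above and the $-$ case forcing $u^2$ into the $Spin(7)$-kernel, which very tightly constrains the image of $u$. The essential subtlety here, which I would finish by direct inspection of the list obtained in the second stage, is to verify that the sign map $\epsilon$ on the would-be $SO(7)$-stabilizer is trivial in each effectively free example; equivalently, that the $-$ possibility never yields a hidden non-trivial stabilizer in the $SO(7)$ quotient.
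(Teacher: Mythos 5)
Your outline follows the same broad strategy as the paper (enumerate homomorphisms by representation theory, test freeness on maximal tori via the conjugacy criterion, relate the two covers), but there is a genuine gap in the part of the argument that carries the real technical weight: you never explain how to decide when two elements of $Spin(7)$ are conjugate \emph{in} $Spin(7)$. Your reduction of the converse direction to the sign $\epsilon$ is correct as far as it goes (if $f_1(u)\,g\,f_2(u)^{-1}=-g$ then $u^2$ acts trivially on $Spin(7)$), but deciding whether a given $u$ produces $\epsilon=+1$ or $\epsilon=-1$ is exactly the question of whether the lifts $f_1(u)$ and $f_2(u)$ are conjugate in $Spin(7)$ or only conjugate after multiplying one of them by the nontrivial central element. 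Conjugacy in $SO(7)$ cannot distinguish these two cases, so the entire issue cannot be settled from the $SO(7)$ data you propose to compute in your second stage. The paper's solution is to build an explicit model of $Spin(7)\subseteq SO(8)$ via the octonions and the Clifford algebra $Cl_8$, compute the maximal torus as $R(\theta_1,\dots,\theta_4)$ with $\theta_1+\theta_3=\theta_2+\theta_4$ together with the projection to $T_{SO(7)}$, and prove (by an orbit--stabilizer count in $W_{SO(8)}$) that two torus elements are conjugate in $Spin(7)$ iff some element of $W_{SO(8)}$ preserving that sub-torus carries one to the other. Some equivalent device (e.g.\ comparing characters of both the vector and the spin representation) is indispensable, and your proposal contains none.

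A related, more local problem: you say you would finish by ``direct inspection of the list obtained in the second stage,'' i.e.\ the list of actions already known to be effectively free on $SO(7)$. But the dangerous candidates for the ``only if'' half of the $Spin(7)$ statement are precisely pairs that \emph{fail} to be effectively free on $SO(7)$ yet might still be effectively free on $Spin(7)$; those were discarded in your second stage, so the proposed inspection targets the wrong list. The paper avoids this by running the classification on $Spin(7)$ first (using the conjugacy test above and a restriction lemma to the two factors and the diagonal $SU(2)$ to prune cases), and only then descending: for $9$ of the $10$ surviving actions the element $(-I,I)$ or $(I,-I)$ lies in the image, so the deck transformation belongs to the acting group and descent of freeness is automatic, while the one exceptional case is checked by hand in $SO(7)$. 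Your $SU(4)$ and $\mathbf{G}_2\times SU(2)$ outlines are workable (the paper handles $\mathbf{G}_2\times SU(2)$ more simply by noting that any non-transitive $SU(2)$-action on the $SU(2)$ factor fixes the identity, then quoting the known classification of $\mathbf{G}_2\bq SU(2)^2$, and distinguishes the two resulting actions by Dynkin indices), but as written the $Spin(7)$ half of the theorem is not proved.
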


\begin{table}[h]

\begin{center}

\begin{tabular}{|c|c|c|}

\hline

 G & Left image & Right image\\
 
 \hline
 \hline
 
 $SU(4)$ & $\diag(A,A)$ & $\diag(B,I_2)$ \\
 
 \hline
 
 $SU(4)$ & $\diag(A,A)$ & $\diag(\pi(B), 1)$ \\
 
 \hline
 
 \hline
 
 \hline
 
 $SO(7)$ & $\diag (\pi(A), I_4)$ & $\diag(\pi(B), i(A))$ \\
 
 \hline
 
 $SO(7)$ & $\diag(\pi(A), I_4)$ & $\diag(i(B), I_3)$ \\
 
 \hline
 
 $SO(7)$ & $\diag(\pi(A), I_4)$ & $\diag(\pi(B), \pi(B), 1)$ \\
 
 \hline
 
 $SO(7)$ & $\diag(\pi(A), I_4)$ & $ \diag(\pi(B), i(B))$ \\
 
 \hline
 
 $SO(7)$ & $\diag(\pi(A), I_4)$ & $ \diag(\pi(A,B), \pi(A))$ \\
 
 \hline
 
 $SO(7)$ & $\diag(\pi(A), I_4)$ & $\diag(\pi(A,B), \pi(B))$ \\
 
 \hline
 
 $SO(7)$ & $\diag(\pi(A),I_4)$  & $ B_{max} $ \\
 
 \hline
 
 $SO(7)$ & $\diag(\pi(A), i(B))$ & $ \diag(\pi(A), \pi(A), 1)$ \\
 
 \hline
 
 $SO(7)$ & $\diag(\pi(A), \pi(B), 1)$ & $\diag(\pi(A), i(A))$\\
 
 \hline
 
 $SO(7)$ & $\diag(A_{Ber},1 , 1)$ & $ \diag(\pi(B), i(B))$ \\
 
 \hline 
 
 \hline
 
 \hline
 
 $\mathbf{G}_2\times SU(2)$ & $(\pi(A,B), A)$ & $(I, A)$\\
 
 \hline
 
 $\mathbf{G}_2\times SU(2)$ & $(\pi(A,B),B)$ & $(I,B)$ \\

 \hline

\end{tabular}

\caption{Biquotients of the form $G\bq SU(2)^2$ with $G$ rank $3$ and simple or $G = \mathbf{G}_2\times SU(2)$}
\label{table:su2class}

\end{center}

\end{table}

In Table \ref{table:su2class}, $I_k$ denotes the $k\times k$ identity matrix, and $\pi$, depending on the number of arguments, denotes either of the canonical double covers $SU(2)\rightarrow SO(3)$  and  $SU(2)^2\rightarrow SO(4)$.  The notation $i(A)$ refers to the standard inclusion $i:SU(2)\rightarrow SO(4)$ obtained by identifying $\mathbb{C}^2$ with $\mathbb{R}^4$.  The notation $B_{max}$ refers to the unique maximal $SO(3)\subseteq SO(7)$, and the notation $A_{Ber}$ refers to the maximal $SO(3)$ in $SO(5)$, whose quotient $B^7 = SO(5)/SO(3)$ is the positively curved Berger space \cite{Ber}.  The term \textit{reduced} refers to the condition that $U$ not act transitively on any factor of $G$, and only applies in the case of $\mathbf{G}_2\times SU(2)$.  Non-reduced biquotients of the form $\mathbf{G}_2\times SU(2)\bq SU(2)^2$ are all diffeomorphic to biquotients of the form $\mathbf{G}_2\bq SU(2)$, and are classified in \cite{KZ}.

It is not a priori clear, but follows from this classification that all biquotients of the form $SO(7)\bq SU(2)^2$ are simply connected.

The lift of an effectively free action on a connected smooth manifold to a connected cover is effectively free, but in general, the cover admits effectively free actions which do not induce effectively free actions on the base.  Our main tool for understanding homomorphisms from $SU(2)^2$ into $Spin(7)$ is utilizing a concrete description, using the octonions and Clifford algebras, of the spin representation $Spin(7)\rightarrow SO(8)$.

The rest of the paper is organized as follows.  Section 2 is devoted to background material on biquotient actions, representation theory, and the octonions.   In Section 3, we prove Theorem \ref{main} in regard to $G=SO(7)$ and $Spin(7)$.  In Section 4, we prove Theorem \ref{main} when $G = SU(4)$ or $\mathbf{G}_2\times SU(2)$.

We would like to thank the anonymous referee for suggesting an alternate approach which significantly simplified Section \ref{oct}.

\section{Background:  Biquotients and Representation Theory}\label{sec:background}

In this section, we cover the necessary background for proving Theorem \ref{main}.

\subsection{Biquotients}

As mentioned in the introduction, given compact Lie groups $U$ and $G$ with $f=(f_1,f_2):U\rightarrow G\times G$ a homomorphism, $U$ naturally acts on $G$ via $u \ast g = f_1(u)\, g\, f_2(u)^{-1}$.

A simple criterion to determine when a biquotient action is effectively free is given by the following proposition.

\begin{proposition}\label{free}  A biquotient action of $U$ on $G$ is effectively free if and only if for any $(u_1,u_2)\in f(U)$, if $u_1$ is conjugate to $u_2$ in $G$, then $u_1 = u_2\in Z(G)$.

\end{proposition}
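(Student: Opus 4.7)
The plan is to unpack ``effectively free'' directly in terms of the fixed-point condition and the definition of the action, and observe that both halves of the equivalence reduce to elementary statements about conjugation.

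First I would recall what the two halves mean. Recall that an action is \emph{effectively free} when every element of $U$ that fixes some point of $G$ must in fact act trivially on all of $G$. So I would begin by separately translating both conditions into equations involving $f_1(u)$ and $f_2(u)$.

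For the fixed-point condition: $u \in U$ fixes some $g \in G$ iff $f_1(u)\, g\, f_2(u)^{-1} = g$, iff $f_1(u) = g\, f_2(u)\, g^{-1}$. Hence $u$ has a fixed point in $G$ iff $f_1(u)$ and $f_2(u)$ are conjugate in $G$. For the ineffective kernel: $u$ acts trivially iff $f_1(u)\, g = g\, f_2(u)$ for all $g \in G$. Setting $g = e$ gives $f_1(u) = f_2(u)$, and then the equation reads $f_1(u)\, g = g\, f_1(u)$ for all $g$, i.e.\ $f_1(u) \in Z(G)$. Conversely, if $f_1(u) = f_2(u) \in Z(G)$ then clearly $u$ acts trivially. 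Thus $u$ is in the ineffective kernel iff $f_1(u) = f_2(u) \in Z(G)$.

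Combining these two translations, effective freeness is exactly the assertion that whenever $f_1(u)$ is conjugate to $f_2(u)$ in $G$, one has $f_1(u) = f_2(u) \in Z(G)$. Since the conditions only depend on the pair $(f_1(u), f_2(u)) = f(u)$, this is equivalent to the stated condition on elements $(u_1,u_2) \in f(U)$, finishing the proof.

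There is no real obstacle here; the argument is just a careful bookkeeping of definitions. The one point to be slightly careful about is not to conflate ``effectively free'' with ``free'': the conclusion allows the common value $u_1 = u_2$ to lie in $Z(G)$ (where it necessarily acts trivially), rather than demanding $u_1 = u_2 = e$.
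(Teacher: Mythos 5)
Your argument is correct and complete: the paper states this proposition without proof (it is a standard criterion going back to Eschenburg), and your direct unpacking of the fixed-point condition and the ineffective kernel is exactly the intended justification. The one subtlety you flag --- that the conclusion permits $u_1=u_2\in Z(G)$ rather than forcing $u_1=u_2=e$ --- is indeed the only point where care is needed, and you handle it correctly.
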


Since every element of a Lie group $U$ is conjugate to an element in its maximal torus $T_U$, it follows that a biquotient action of $U$ on $G$ is effectively free if and only if the action is effectively free when restricted to $T_U$.

To begin classifying biquotient actions, we note that, as mentioned in \cite{DeV1}, replacing $(f_1, f_2)$ in any one of the following three ways will give an equivalent action:  $(\phi\circ  f_1, \phi \circ f_2)$ for $\phi$ an automorphism of $G$, $(f_1 \circ \psi, f_2 \circ \psi)$ for $\psi$ an automorphism of $U$, or $(C_{g_1} \circ f_1, C_{g_2}\circ f_2)$ where $C_{g_i}$ denotes conjugation by $g_i$.

Hence, we may classify all biquotient actions of $U$ on $G$ by classifying the conjugacy classes of images of homomorphisms from $U$ into $G\times G$ and then checking each of these to see if the induced action is effectively free.  Combining this with Proposition \ref{free}, it follows that if $f(T_U)\subseteq T_{G\times G}$, then the action of $U$ on $G$ is (effectively) free if and only if the induced action of $T_U$ on $T_{G\times G}$ is (effectively) free.

We also note the following proposition, found in \cite{KZ}.

\begin{proposition}Suppose $U$ acts on $G_1$ and $G_2$ and that the action on $G_2$ is transitive with isotropy group $U_0$.  Suppose further that the diagonal action of $U$ on $G_1\times G_2$ is effectively free.  Then the action of $U_0$ on $G_1$ is effectively free and the quotients $(G_1\times G_2)\bq U$ and $G_1\bq U_0$ are canonically diffeomorphic.

\end{proposition}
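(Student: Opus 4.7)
The plan is a standard slice argument: fix a basepoint $p\in G_2$, so that its isotropy under the $U$-action is precisely $U_0$, and compare $U$-orbits in $G_1\times G_2$ with $U_0$-orbits in the slice $G_1\times\{p\}$.

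The effective freeness claim is almost immediate. If $u_0\in U_0$ fixes some $g_1\in G_1$, then because $u_0$ also fixes $p$, it fixes $(g_1,p)\in G_1\times G_2$. By the hypothesis that the diagonal $U$-action is effectively free, $u_0$ acts trivially on all of $G_1\times G_2$, in particular on $G_1$, which is exactly effective freeness for the $U_0$-action. The same reasoning shows the ineffective kernels of $U_0$ acting on $G_1$ and of $U$ acting on $G_1\times G_2$ coincide, so that both quotients are smooth manifolds; a dimension count gives $\dim(G_1\bq U_0)=\dim G_1-\dim U_0=\dim G_1+\dim G_2-\dim U=\dim((G_1\times G_2)\bq U)$.

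For the diffeomorphism, consider the smooth $U_0$-equivariant map $\iota:G_1\to G_1\times G_2$, $g\mapsto(g,p)$. Composing with the quotient to $(G_1\times G_2)\bq U$ is $U_0$-invariant and descends to a smooth map $\bar\iota:G_1\bq U_0\to(G_1\times G_2)\bq U$. Surjectivity follows from the transitivity of the $U$-action on $G_2$: given $(g_1,g_2)$, pick $v\in U$ with $v\ast g_2=p$; then $v\ast(g_1,g_2)\in\iota(G_1)$. Injectivity is equally direct: if $u\ast(g_1,p)=(g_1',p)$ then $u\ast p=p$, so $u\in U_0$.

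To upgrade $\bar\iota$ from a smooth bijection to a diffeomorphism, I would construct a smooth inverse locally. Choose a smooth section $s:V\to U$ of the principal $U_0$-bundle $U\to U/U_0\cong G_2$ on an open $V\subseteq G_2$, and consider the assignment $(g_1,g_2)\mapsto[s(g_2)^{-1}\ast g_1]\in G_1\bq U_0$. Two choices of section differ by a smooth $U_0$-valued function, so the induced map to $G_1\bq U_0$ is canonical, $U$-invariant, and inverse to $\bar\iota$ over $V$. I do not anticipate a serious obstacle; the entire argument is a direct slice-type reduction, and the main care lies only in organizing the local inverses into a coherent smooth global inverse.
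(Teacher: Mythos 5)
Your argument is correct. Note that the paper itself offers no proof of this proposition --- it is quoted from Kapovitch--Ziller \cite{KZ} --- so there is nothing internal to compare against; your slice-type reduction is the standard proof of this reduction lemma. All the essential points are in place: the fixed-point argument for effective freeness of $U_0$ on $G_1$ (and the coincidence of ineffective kernels), surjectivity via transitivity on $G_2$, injectivity via $u\ast p=p\Rightarrow u\in U_0$, and the local inverse built from sections of $U\rightarrow U/U_0\cong G_2$. Your closing worry about assembling the local inverses is unfounded: you have already verified that the locally defined maps are independent of the choice of section and $U$-invariant, so they agree on overlaps and glue to a global smooth $U$-invariant map $G_1\times G_2\rightarrow G_1\bq U_0$, which descends through the submersion $G_1\times G_2\rightarrow (G_1\times G_2)\bq U$ to the desired smooth inverse of $\bar\iota$.
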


So we see that, in terms of classifying biquotients, we may assume our biquotients are reduced -- that is, $U$ does not act transitively on any simple factor of $G$.

\

Since the restriction of any effectively free action to a subgroup is effectively free, we have the following simple lemma which we will make repeated use of.

\begin{lemma}\label{restest} Suppose $f=(f_1,f_2):U = SU(2)^2 \rightarrow G^2$ defines an effectively free action of $U$ on $G$.  Then the restriction of $f$ to both factors of $U$, as well as to the diagonal $SU(2)$ in $U$, must define an effectively free action of $SU(2)$ on $G$.

\end{lemma}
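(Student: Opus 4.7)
The plan is to deduce this directly from Proposition \ref{free}, essentially in one step. First I would identify the three relevant subgroups of $U = SU(2)^2$: the two slot inclusions $SU(2)\times\{1\}$ and $\{1\}\times SU(2)$, together with the diagonal $\Delta SU(2) = \{(A,A) : A \in SU(2)\}$. Each is a closed subgroup of $U$ isomorphic to $SU(2)$, so restricting the given homomorphism $f=(f_1,f_2)$ to any one of them yields a homomorphism $SU(2) \rightarrow G\times G$, which in turn defines a biquotient action of $SU(2)$ on $G$ in the sense of Section \ref{sec:background}.

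The second step is to invoke Proposition \ref{free} twice, once in each direction. Because $f$ defines an effectively free action of $U$ on $G$, the proposition tells us that for every pair $(u_1,u_2) \in f(U)$ with $u_1$ conjugate to $u_2$ in $G$, we must have $u_1 = u_2 \in Z(G)$. For any of the three subgroups $H \subseteq U$ above we have $f(H) \subseteq f(U)$, so the very same implication holds verbatim for all pairs in $f(H)$. Applying Proposition \ref{free} in the converse direction to the restricted homomorphism $f|_H : H \to G\times G$ then produces exactly the conclusion that the induced $SU(2)$-action on $G$ is effectively free.

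I expect no real obstacle: the lemma is a formal corollary of Proposition \ref{free} combined with the trivial observation that a condition which holds for every element of a set holds for every element of any subset. In particular, one does not need to check injectivity of $f|_H$ or any further structural property, since Proposition \ref{free} only cares about the image set inside $G \times G$. The only reason to record the statement explicitly is that the three restrictions identified here will be reused throughout the proof of Theorem \ref{main} as a tool for pruning candidate homomorphisms $SU(2)^2 \to G\times G$.
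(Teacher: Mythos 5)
Your proof is correct and matches the paper's reasoning: the paper dispenses with this lemma in one line by noting that the restriction of any effectively free action to a subgroup is effectively free, which is exactly the content of your observation that the conjugacy condition of Proposition \ref{free} holds for $f(H)\subseteq f(U)$ whenever it holds for $f(U)$. Nothing is missing.
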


We conclude this subsection with a a proposition relating effectively free actions on a connected cover with effectively free actions on a connected base space.

\begin{proposition}\label{easyhalf}

Suppose $\pi:\tilde{M}\rightarrow M$ is an equivariant covering of smooth connected $G$-manifolds.  If the $G$ action on $M$ is effectively free, then it is effectively free on $\tilde{M}$ as well.  Conversely, if the deck group is a subset of $G\subseteq Diff(M)$ and the action on $\tilde{M}$ is effectively free, it is also effectively free on $M$.

\end{proposition}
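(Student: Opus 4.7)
My plan is to prove each implication by tracking where $g \in G$ has fixed points, relying on the key fact that a deck transformation of a connected cover with any fixed point is the identity.

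For the forward direction, I take $g \in G$ with a fixed point $\tilde{x} \in \tilde{M}$. Equivariance immediately gives $g \cdot \pi(\tilde{x}) = \pi(\tilde{x})$, so effective freeness on $M$ forces $g$ to lie in the kernel of the $M$-action, i.e.\ to act trivially on $M$. This means $\pi \circ g = \pi$ as maps $\tilde{M} \to M$, so the self-diffeomorphism $g : \tilde{M} \to \tilde{M}$ is a deck transformation of $\pi$. Since it also fixes $\tilde{x}$ on the connected cover, it must be the identity; hence $g$ acts trivially on $\tilde{M}$, which is exactly what effective freeness on $\tilde{M}$ requires.

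For the converse, assuming the deck group $\Gamma$ of $\pi$ is contained in $G$ and the $G$-action on $\tilde{M}$ is effectively free, I take $g \in G$ fixing some $x \in M$, lift to $\tilde{x} \in \pi^{-1}(x)$, and note that $g \cdot \tilde{x}$ lies in the same fiber. Assuming the cover is regular, so that $\Gamma$ acts transitively on fibers, there is $\gamma \in \Gamma \subseteq G$ with $\gamma \cdot \tilde{x} = g \cdot \tilde{x}$, whence $\gamma^{-1} g$ fixes $\tilde{x}$ and therefore acts trivially on $\tilde{M}$ by effective freeness there. Since $\gamma$ acts trivially on $M$ by definition, so does $g$, giving effective freeness on $M$.

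The only real subtlety is in the converse: one needs the deck group to act transitively on fibers, i.e.\ the cover to be regular. This holds automatically in the paper's application, where $\pi : Spin(7) \to SO(7)$ is a connected double cover, so the hypothesis costs nothing in practice.
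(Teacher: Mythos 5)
Your proof is correct and follows essentially the same route as the paper's: in the forward direction, equivariance plus effective freeness downstairs shows that $g$ acts on $\tilde{M}$ as a deck transformation with a fixed point, hence trivially; in the converse, one composes $g$ with a deck transformation to produce a fixed point upstairs. Your remark about regularity is apt but not a defect relative to the paper --- the paper's own converse also implicitly uses that the deck group acts transitively on fibers, which holds in the intended application since $Spin(7)\rightarrow SO(7)$ is a connected double cover.
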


\begin{proof}

Suppose there is a $g\in G$ and a $p\in \tilde{M}$ with $g\ast p = p$.  Then $g\ast \pi(p) = \pi(g\ast p) =\pi(p)$, so $g$ fixes $\pi(p)$.  Since the action on $M$ is effectively free, $g$  must act trivially on all of $M$.  This implies that, for any $q\in \tilde{M}$, that $\pi(g\ast q) = \pi(q)$, that is, multiplication by $g$ is an element of the deck group of the covering.  Since $g$ fixes $p$, it must thus fix all of $\tilde{M}$.

\

Conversely, suppose the $G$ action on $\tilde{M}$ is effectively free and the deck group is a subgroup of $G$.  If $\pi(p) = g\ast \pi(p) = \pi(g\ast p)$ for some $g\in G$, $p\in \tilde{M}$, then we see that $p = \mu(g\ast p)$ for some $\mu$ in the deck group.  Writing $\mu = g_1\in G$, we have $p = (g_1 g) \ast p$.  Since the action on $\tilde{M}$ is effectively free, we conclude that $g_1 g$ fixes every point of $\tilde{M}$.  Then for any $q\in\tilde{M}$, $\pi(q) = \pi(g_1 g \ast q) = \pi(g\ast q) = g\ast \pi(q)$, that is, $g$ fixes $M$ pointwise.

\end{proof}

We note that the if the hypothesis on the deck group is omitted, then the converse of Proposition \ref{easyhalf} is not true in general for biquotients, though it is for homogeneous actions.  For example, the biquotient action of $SU(2) = Sp(1)$ on $Sp(3)$ given by $p\ast A = \diag(p,1,1) A \diag(1,p,p)^{-1}$ is free, but the induced action on $Sp(3)/\{\pm I\}$ is not effectively free because the element $-1\in Sp(1)$ fixes $[I]\in Sp(3)/\{\pm I\}$ but does not fix $[\diag(R(\theta), 1)]$ unless $\theta$ is an integral multiple of $\pi$.  In this case, the deck group element centralizes the $G$ action.

On the other hand, we will see that the hypothesis on the deck group is not necessary in general:  there is a unique biquotient of the form $Spin(7)\bq SU(2)^2$ for which $SU(2)^2\subseteq Diff(Spin(7))$ does not contain the deck group, but the induced $SU(2)^2$ action on $SO(7)$ is still effectively free.

\subsection{Representation theory}
\label{Reptheory}

Our homomorphisms $f:U\rightarrow G\times G$ will be constructed via representation theory.    The following information can all be found in \cite{FH}.  Recall that a representation of $U$ is a homomorphism $\rho:U\rightarrow Gl(V)$ for some complex vector space $V$.  It is well known that if $U$ is a compact semi-simple Lie group, then $\rho(U)$ is conjugate to a subgroup of $SU(V)$ and that $\rho$ is completely reducible -- every such $\rho$ is a direct sum of irreducible representations.  The representation $\rho$ is called orthogonal if the image is conjugate to a subgroup of the standard $SO(n)\subseteq SU(n)$.  If, $V = \C^{2n}$, and the image of $\rho$ is conjugate to a subgroup of the standard $Sp(n)\subseteq SU(2n)$, $\rho$ is called symplectic.  If $\rho$ is neither orthogonal nor symplectic, it is called complex.

We note that an irreducible representation is complex iff it is not isomorphic to its conjugate representation.  Recall the following well known proposition.

\begin{proposition}\label{symp} A representation $\rho$ is orthogonal (symplectic) if and only if $\rho \cong \bigoplus_i (\psi_i\oplus \overline{\psi}_i)\oplus\ \bigoplus_j \phi_j,$ where each $\phi_j$ is orthogonal (symplectic) and $\overline{\psi}_i$ denotes the conjugate representation of $\psi_i$.

\end{proposition}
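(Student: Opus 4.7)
The plan is to split the proof into the easy ``if'' direction and the harder ``only if'' direction, using the standard Frobenius--Schur trichotomy: each irreducible representation of the compact semi-simple Lie group $U$ is either orthogonal, symplectic, or complex.

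For the ``if'' direction, I would first show that $\psi \oplus \overline\psi$ always carries both an invariant symmetric and an invariant skew nondegenerate bilinear form, regardless of the type of $\psi$. The key observation is that an invariant Hermitian form on $\psi$ furnishes an invariant $\C$-bilinear pairing $\psi \otimes \overline\psi \to \C$. Denoting this pairing by $\langle \cdot, \cdot\rangle$, one sets
\[
B_{\pm}\bigl((v,\bar w),(v',\bar w')\bigr) = \langle v,w'\rangle \pm \langle v',w\rangle;
\]
then $B_+$ is symmetric and $B_-$ is skew, and both are $U$-invariant and nondegenerate. Combining these with a chosen invariant symmetric (respectively skew) form on each $\phi_j$ yields the desired structure on $\rho$.

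For the ``only if'' direction, suppose $\rho$ carries a nondegenerate invariant symmetric form $B$; the symplectic case is entirely analogous. Decompose $\rho = \bigoplus_V V^{\oplus m_V}$ into isotypic components indexed by isomorphism classes of irreducibles. The form $B$ gives an equivariant isomorphism $\rho \to \rho^*$, and by Schur's lemma this isomorphism must send $V^{\oplus m_V}$ to $(V^*)^{\oplus m_V}$. If $V$ is complex, then $V \not\cong V^*$, so $V^* \cong \overline V$ must also appear in $\rho$ with multiplicity $m_V$; pairing the two contributes exactly $m_V$ terms of the form $\psi \oplus \overline\psi$.

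The main obstacle is the self-dual case, where $V \cong V^*$ and $B$ restricts to a nondegenerate invariant form on $V^{\oplus m_V}$. By Schur's lemma, such an invariant form is encoded by an $m_V \times m_V$ matrix $A$ scaling the canonical generator of $\operatorname{Hom}(V,V^*)$, and the symmetry of $B$ interacts with the symmetry type of this generator. If $V$ is orthogonal, $A$ must be symmetric, so it can be diagonalized over $\C$, realizing $V^{\oplus m_V}$ as an orthogonal direct sum of $m_V$ copies of $V$, each contributing a $\phi_j$. If $V$ is symplectic, the generator of $\operatorname{Hom}(V,V^*)$ is itself skew, and a parity calculation forces $A$ to be \emph{skew}-symmetric; hence $m_V$ must be even and $A$ can be brought to standard skew block form, yielding $m_V/2$ copies of $V \oplus V \cong V \oplus \overline V$ (using that symplectic irreducibles are self-conjugate), each of which is a $\psi \oplus \overline\psi$ term. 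Assembling these pieces proves the claim.
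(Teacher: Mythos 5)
Your argument is correct: the hyperbolic forms $B_{\pm}$ handle the ``if'' direction, and the ``only if'' direction correctly reduces via Schur's lemma to the isotypic components, with the parity constraint $A=\epsilon A^{T}$ (where $\epsilon$ is the Frobenius--Schur type of $V$) forcing even multiplicity exactly for the irreducibles of the opposite type. The paper states this proposition without proof, citing it as well known from Fulton--Harris, and what you have written is essentially the standard textbook argument, so there is nothing to reconcile.
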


Since we are interested in the case $U = SU(2)^2$, we note that the irreducible representations of a product of compact Lie groups are always given as outer tensor products of irreducible representations of the factors.  We also recall that an outer tensor product of two irreducible representations is orthogonal if the two factors are either both orthogonal or both symplectic, and the outer tensor product is symplectic if and only if one of the representations is symplectic and the other is orthogonal.

As mentioned in the previous subsection, we need to classify the conjugacy classes of images of homomorphisms from $U$ to $G\times G$.  To relate this problem to representation theory, we use the following theorem of Mal'cev.

\begin{theorem}\label{Mal'cev}Let $f_1, f_2:U\rightarrow G$ with $$G\in\{ SU(n), Sp(n), SO(2n+1)\}$$ be interpreted as complex, symplectic, or orthogonal representations.  If the representations are equivalent, then the images are conjugate in $G$.

\end{theorem}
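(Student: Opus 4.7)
The plan is to take an arbitrary intertwiner $T \in GL(V)$ satisfying $f_2(u) = T f_1(u) T^{-1}$ and modify it in stages until it lies in $G$; this will show that $f_1(U)$ and $f_2(U) = T f_1(U) T^{-1}$ are conjugate in $G$. The engine for each stage is Schur's lemma applied to the isotypic decomposition of $V$ under $f_1(U)$, combined with the existence of positive square roots in the commutant $\operatorname{End}_U(V)$.

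First I would dispose of $G = SU(n)$ via a polar decomposition argument. Both $f_i(U)$ act unitarily, so $A := T^*T$ is positive definite, and one checks that $A$ commutes with $f_1(U)$ by taking adjoints of the intertwining relation and using $f_i(u)^* = f_i(u)^{-1}$. Hence $A^{1/2}$ lies in $\operatorname{End}_U(V)$ as well, and $T_0 := TA^{-1/2}$ is unitary (since $T_0^*T_0 = A^{-1/2} A A^{-1/2} = I$) and still intertwines. A scalar of modulus $1$ adjusts the determinant to $1$, placing $T_0$ in $SU(n)$.

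For $G = Sp(n)$ or $SO(2n+1)$, repeat the idea with the additional $G$-invariant non-degenerate bilinear form $\omega$ (skew or symmetric, respectively). Starting from the unitary intertwiner $T_0$ produced above, the pullback $(v,w) \mapsto \omega(T_0 v, T_0 w)$ is a second $f_1(U)$-invariant form of the same type as $\omega$, so by Schur it equals $\omega(M\cdot,\cdot)$ for some invertible $M \in \operatorname{End}_U(V)$. The crucial technical point is that $M$ is positive with respect to the Hermitian structure, a consequence of $T_0$ being unitary together with the compatibility between $\omega$ and the Hermitian form. Thus $M$ admits a positive square root $N$ lying simultaneously in the commutant and in $U(\dim V)$, and $T_0 N^{-1}$ is simultaneously unitary and $\omega$-preserving; it therefore lies in $U(2n) \cap Sp(n,\mathbb{C}) = Sp(n)$ in the symplectic case, and in $U(2n{+}1) \cap O(2n{+}1,\mathbb{C}) = O(2n{+}1)$ in the orthogonal case, where a final multiplication by $-I$ (which is central and therefore still intertwines) corrects the determinant if necessary.

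The principal obstacle is justifying the positivity of $M$, which is what allows the second correction to live inside $U(\dim V)$ rather than merely $GL(V)$. Once positivity is verified by comparing the bilinear-adjoint of $T_0$ with its Hermitian adjoint (both being intertwiners of the same pair of representations, they must agree up to an element of the commutant that one identifies as a positive operator), the rest of the argument is essentially formal manipulation. Since the result is classical, in the actual paper one would simply cite the standard source rather than reproduce these details.
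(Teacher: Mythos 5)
The paper does not actually prove this statement---it is quoted as a classical theorem of Mal'cev and used as a black box---so your proposal has to stand on its own. Your $SU(n)$ case via polar decomposition is correct, and your overall strategy for $Sp(n)$ and $SO(2n+1)$ (correct a unitary intertwiner $T_0$ by a unitary element of the commutant so that it also preserves $\omega$) is the right one. The gap is exactly the ``crucial technical point'' you flag: the operator $M$ defined by $\omega(T_0v,T_0w)=\omega(Mv,w)$ need \emph{not} be positive, and comparing the $\omega$-adjoint of $T_0$ with its Hermitian adjoint only shows that $M$ lies in the commutant, not that it is positive. Concretely, take $f_1=f_2=4\phi_0+\phi_2:SU(2)\rightarrow SO(7)$ and, in a basis adapted to the four trivial summands, let $T_0=\diag(i,-i,1,1,1,1,1)$. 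This is a unitary intertwiner of determinant $1$; with $\omega$ the standard symmetric form ($S=I$) one gets $M=S^{-1}T_0^{T}ST_0=\diag(-1,-1,1,1,1,1,1)$, which is Hermitian but not positive, so it has no positive square root and your correction $T_0N^{-1}$ cannot be manufactured as described. (The conclusion of the theorem is trivially true in this example, but your procedure starts from an \emph{arbitrary} unitary intertwiner and must handle this $T_0$.)

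What is true, and what the classical argument establishes, is finer. Writing $\omega(v,w)=\langle\sigma v,w\rangle$ for an antilinear $\sigma$ commuting with $f_1(U)$, the pullback $\omega'(v,w)=\omega(T_0v,T_0w)$ corresponds to $\sigma'=T_0^{-1}\sigma T_0$, so the linear operators $\sigma^2$ and $\sigma'^2$ have the same spectrum; one then decomposes $V$ into isotypic components, classifies the invariant forms on each in terms of the real/complex/quaternionic type of the irreducible and the induced form on the multiplicity space, and checks that this spectral datum is the only obstruction to moving $\omega'$ to $\omega$ by a \emph{unitary} element of the commutant. A cleaner route avoiding this bookkeeping: an orthogonal (resp.\ symplectic) representation is the complexification of a real representation (resp.\ comes from a quaternionic one); two real representations with isomorphic complexifications are already isomorphic over $\R$ (since $\operatorname{Hom}_{\R[U]}(V,W)\otimes\C\cong\operatorname{Hom}_{\C[U]}(V_{\C},W_{\C})$ and the determinant does not vanish identically on a real form of a space containing an isomorphism), and a real $U$-isomorphism becomes an isometry after polar decomposition over $\R$; finally $-I$ fixes the determinant in the odd orthogonal case. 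Either repair closes the gap; the shortcut through ``$M$ is positive'' does not.
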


The irreducible representations for every compact simply connected simple Lie group have been completely classified.  For $SU(2)$, we have the following proposition.

\begin{proposition}\label{sp1irrep} For each $n \geq 1$, $SU(2)$ has a unique irreducible representation of dimension $n$.  When $n$ is even this representation is symplectic, and when $n$ is odd this representation is orthogonal.

\end{proposition}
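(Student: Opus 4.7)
The plan is to construct the representations explicitly, establish irreducibility and uniqueness via highest weight theory, and then determine the real type by exhibiting an $SU(2)$-invariant bilinear form.

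For each $n \geq 1$, I would take $V_n = \operatorname{Sym}^{n-1}(\C^2)$, the space of homogeneous polynomials of degree $n-1$ in two variables, on which $SU(2)$ acts through its standard action on $\C^2$. This has complex dimension $n$. Restricting to a maximal torus $T = \{\diag(z,\bar z) : |z|=1\}$, the monomials $x^{n-1-k}y^k$ for $0 \leq k \leq n-1$ form a weight basis with distinct weights $n-1, n-3, \ldots, -(n-1)$. Irreducibility is a one-line consequence of standard $\mathfrak{sl}_2(\C)$ theory: the highest weight vector $x^{n-1}$ generates every other weight space under the lowering operator, so any nonzero invariant subspace equals $V_n$. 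Uniqueness up to isomorphism follows from the usual highest weight classification for $\mathfrak{sl}_2$: any finite dimensional irreducible complex representation of $SU(2)$ is determined by its (non-negative integer) highest weight, and since $V_n$ realizes the weight $n-1$, every irreducible representation of dimension $n$ is isomorphic to $V_n$.

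For the orthogonal/symplectic dichotomy, I would use the fact that $V_2 = \C^2$ carries the $SU(2)$-invariant (indeed $SL(2,\C)$-invariant) symplectic form $\omega(u,v) = u_1 v_2 - u_2 v_1$. The tensor $\omega^{\otimes(n-1)}$ is a nondegenerate $SU(2)$-invariant bilinear form on $(\C^2)^{\otimes(n-1)}$, and its restriction to the symmetric power $V_n$ is again nondegenerate (checked on the weight basis, where the monomial $x^{n-1-k}y^k$ pairs nontrivially only with $y^{n-1-k}x^k$). Swapping the two arguments permutes the $n-1$ tensor factors of $\omega$ and therefore multiplies the form by $(-1)^{n-1}$. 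So the invariant form on $V_n$ is symmetric when $n$ is odd and alternating when $n$ is even, giving the claimed types via Proposition~\ref{symp} (applied in its defining form: existence of an invariant symmetric, resp.\ alternating, nondegenerate form characterizes orthogonal, resp.\ symplectic, representations).

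The main obstacle is the last step, where one must verify both the sign under argument swap and the nondegeneracy of the restricted form. A clean way to handle both at once is to write the restricted form explicitly on pure symmetric tensors $u_1 \cdots u_{n-1}$ and $v_1 \cdots v_{n-1}$ as the symmetric sum $\sum_{\sigma \in S_{n-1}} \prod_i \omega(u_i, v_{\sigma(i)})$; the sign $(-1)^{n-1}$ is then immediate from $\omega(v,u) = -\omega(u,v)$, and nondegeneracy can be read off the weight basis pairing. Everything else is textbook representation theory, so I would present it compactly and refer to \cite{FH} for the details of the highest weight classification.
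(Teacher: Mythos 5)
Your proof is correct and complete; it is the standard textbook argument (construct $V_n=\operatorname{Sym}^{n-1}(\C^2)$, apply highest weight theory for $\mathfrak{sl}_2$, and read off the type from the symmetrized power of the invariant symplectic form on $\C^2$), and indeed the paper offers no proof of this proposition at all, simply quoting it as part of the known classification with a pointer to \cite{FH}. The only cosmetic point: Proposition~\ref{symp} as stated in the paper is the decomposition criterion, not the bilinear-form characterization of orthogonal/symplectic type, so your final step really rests on the Frobenius--Schur fact that an irreducible unitary representation is of real (resp.\ quaternionic) type iff it admits a nondegenerate invariant symmetric (resp.\ alternating) bilinear form; you acknowledge this parenthetically, and it would be cleaner to cite that fact directly rather than Proposition~\ref{symp}.
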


We will use the standard notation $\phi_i:SU(2)\rightarrow SU(i+1)$ to denote the unique $i+1$-dimensional irreducible representation of $SU(2)$ and $\phi_{ij}$ to denote $\phi_i\otimes \phi_j:U\rightarrow SU( (i+1)(j+1)).$  We note that, if $S^1 = \{\diag(z, \overline{z}): z\in \mathbb{C}\text{ and }|z|=1\}$ denotes the standard maximal torus of $SU(2)$, then $\phi_i(S^1) = \{\diag(z^i, z^{i-2},..., z^{-i})\}\subseteq SU(i+1)$.

Some of the lower dimensional $\phi_i$ are more commonly known.  The representation $\phi_1:SU(2)\rightarrow SU(2)$ is the inclusion map, $\phi_2:SU(2)\rightarrow SO(3)\subseteq SU(3)$ is the canonical double cover map, and $\phi_4:SU(2)\rightarrow SO(5)\subseteq SU(5)$ is the Berger embedding of $SO(3)$ into $SO(5)$.  The representation $\phi_6$ has image $B_{max}\subseteq SO(7)$, mentioned just after Theorem \ref{main}.  The representation $\phi_{11}:SU(2)^2\rightarrow SO(4)\subseteq SU(4)$ is the canonical double cover map.

Proposition \ref{sp1irrep} implies that all irreducible representations of both $SU(2)$ and $U$ are either orthogonal or symplectic, so $\phi_i = \overline{\phi}_i$.  Then Proposition \ref{symp} implies that a representation of either $SU(2)$ or $U$ is orthogonal (symplectic) iff every irreducible symplectic (orthogonal) subrepresentation appears with even multiplicity.

\subsection{The Octonions and Clifford Algebras}\label{oct}

The octonions $\mathbb{O}$, sometimes referred to as the Cayley numbers, are an $8$-dimensional non-associative normed division algebra over $\mathbb{R}$.  The octonions are alternative, meaning that the subalgebra generated by any two elements of $\mathbb{O}$ is associative.  In fact, if $x,y\in \mathbb{O}$, then the algebra generated by $x$ and $y$ is isomorphic to either $\mathbb{R}$, $\mathbb{C}$, or $\mathbb{H}$.  All of the necessary background may be found in \cite{GVZ}.  We will follow the conventions in \cite{Ke2}.

Viewing $\mathbb{O} = \mathbb{H} + \mathbb{H}l$, where $\mathbb{H}$ denotes the division algebra of quaternions, the multiplication is defined by $$(a+bl)\cdot (c+dl) = (ac-\overline{d}b) +(da + b\overline{c})l .$$

We use the canonical basis $$\{e_0 = 1, e_1 = i, e_2 = j, e_3 = k, e_4 = l, e_5 = il, e_6 = jl, e_7=kl\},$$ which we declare to be orthonormal.  Then the multiplication table is given in Table \ref{table:cayleymult}, where the entries are of the form $(\text{row}) \cdot(\text{column}).$

\begin{table}[h]

\begin{center}

\begin{tabular}{|c||c|c|c|c|c|c|c|}

\cline{2-8}
\multicolumn{1}{c|}{} & $e_1$ & $e_2$ & $e_3$ & $e_4$ & $e_5$ & $e_6$ & $e_7$\\ \hline

\hline

$e_1$ & $-1$ & $e_3$ & $-e_2$ & $e_5$ & $-e_4$ & $-e_7$ & $e_6$  \\ \hline

$e_2$ & $-e_3$ & $-1$ & $e_1$ & $e_6$ & $e_7$ & $-e_4$ & $-e_5$ \\ \hline

$e_3$ & $e_2$ & $-e_1$ & $-1$ & $e_7$ & $-e_6$ & $e_5$ & $-e_4$\\ \hline

$e_4$ & $-e_5$ & $-e_6$ & $-e_7$ & $-1$ & $e_1$ & $e_2$ & $e_3$\\ \hline

$e_5$ & $e_4$ & $-e_7$ & $e_6$ & $-e_1$ & $-1$ & $-e_3$ & $e_2$\\ \hline

$e_6$ & $e_7$ & $e_4$ & $-e_5$ & $-e_2$ & $e_3$ & $-1$ & $-e_1$\\ \hline

$e_7$ & $-e_6$ & $e_5$ & $e_4$ & $-e_3$ & $-e_2$ & $e_1$ & $-1$\\ \hline

\end{tabular}

\caption{Multiplication table for octonions}\label{table:cayleymult}

\end{center}

\end{table}

We use the octonions to construct an explicit embedding of $Spin(7)$ into $SO(8)$.  We will use this embedding to understand biquotients of the form $Spin(7)\bq SU(2)^2$.

\

Consider the Clifford algebra $$Cl_8 \cong \mathbb{R}^{16\times 16} =\left\{ \begin{bmatrix} A & B\\ C&D\end{bmatrix}: A,B,C,D\in \mathbb{R}^{8\times 8}\right\}.$$  We  linearly embed $\mathbb{O}$ into $Cl_8$ using left multiplication: if $L_x\in \mathbb{R}^{8\times 8}$ denotes left multiplication by $x\in \mathbb{O}$, then we identify $x\in \mathbb{O}$ with $\widehat{x} = \begin{bmatrix} 0 & -L_{\overline{x}}\\ L_x & 0\end{bmatrix}$.  A simple calculation shows $\widehat{x}\widehat{y} = \begin{bmatrix} -L_{\overline{x}} L_y & \\ & -L_x L_{\overline{y}} \end{bmatrix}$.  Polarizing the identity $L_{\overline{x}} L_x = \langle x, x\rangle I_8$, which follows from the fact that $\mathbb{O}$ is normed and alternative, we conclude $L_{\overline{x}} L_y + L_{\overline{y}} L_x = 2\langle x,y\rangle I_8$.  From here, a simple calculation shows \begin{equation}\label{cliffcommute} \widehat{x}\widehat{y} + \widehat{y}\widehat{x} = -2\langle x,y\rangle I_{16}. \end{equation}

We let $S^7 \subseteq \mathbb{O}$ denote the unit length octonions.  For $v\in S^7$, we note that $\widehat{v}\in O(16)$, which follows from the fact that $L_v:\mathbb{O}\rightarrow \mathbb{O}$ is an isometry.  Further, we also point out that $\widehat{v} \widehat{v} = \begin{bmatrix} -L_{\overline{v}} L_v & \\ & -L_v L_{\overline{v}}\end{bmatrix} = -I_{16}$.  It follows that $\widehat{v}^{-1} = -\widehat{v}$.

\begin{proposition}\label{reflection}  If $v\in S^7$, then conjugation by $\widehat{v}$ fixes $\widehat{v}$ and acts as $-1$ on $\widehat{v}^\bot\subseteq \widehat{\mathbb{O}}$.

\end{proposition}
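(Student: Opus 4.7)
The plan is to leverage equation (\ref{cliffcommute}) directly, together with the observation recorded just before the proposition that $\widehat{v}^{-1} = -\widehat{v}$ for $v \in S^7$. The result amounts to two short computations, one on $\widehat{v}$ itself and one on its orthogonal complement inside $\widehat{\mathbb{O}}$, after which linearity of conjugation finishes the argument.

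First, I would dispatch the fixed direction: conjugation of any element by itself is trivial, so $\widehat{v}\,\widehat{v}\,\widehat{v}^{-1} = \widehat{v}$ with no use of (\ref{cliffcommute}) at all. Next, for the orthogonal complement, I would take $w \in \mathbb{O}$ with $\langle v, w\rangle = 0$. Substituting $x = v$ and $y = w$ in (\ref{cliffcommute}) gives
\begin{equation*}
\widehat{v}\,\widehat{w} + \widehat{w}\,\widehat{v} = -2\langle v, w\rangle I_{16} = 0,
\end{equation*}
so $\widehat{v}\,\widehat{w} = -\widehat{w}\,\widehat{v}$. Multiplying on the right by $\widehat{v}^{-1}$ yields
\begin{equation*}
\widehat{v}\,\widehat{w}\,\widehat{v}^{-1} = -\widehat{w}\,\widehat{v}\,\widehat{v}^{-1} = -\widehat{w},
\end{equation*}
which is the claimed $-1$ action on the orthogonal complement.

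Since the map $\mathbb{O} \to Cl_8$ sending $x \mapsto \widehat{x}$ is linear, the decomposition $\mathbb{O} = \mathbb{R}v \oplus v^{\bot}$ transports to $\widehat{\mathbb{O}} = \mathbb{R}\widehat{v} \oplus \widehat{v}^{\bot}$, and conjugation by $\widehat{v}$ is linear on $\widehat{\mathbb{O}}$, so the two cases above determine the action on all of $\widehat{\mathbb{O}}$. There is really no main obstacle here beyond recognizing that (\ref{cliffcommute}) encodes precisely the anticommutation needed, and that the relation $\widehat{v}^{-1} = -\widehat{v}$ is exactly what converts the anticommutation into the conjugation statement.
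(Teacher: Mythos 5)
Your proof is correct and follows essentially the same route as the paper: both arguments reduce to the anticommutation relation from \eqref{cliffcommute} for orthogonal elements together with $\widehat{v}^{-1}=-\widehat{v}$. The only difference is cosmetic --- the paper computes $\widehat{v}\,\widehat{x}\,\widehat{v}^{-1}$ for a general $x=\lambda v+x^{\bot}$ in a single chain, whereas you treat the two summands separately and invoke linearity of conjugation at the end.
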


\begin{proof}
For $x\in \mathbb{O}$, decompose it as $x = \lambda v + x^\bot$ with $\lambda\in\mathbb{R}$ and $x^\bot$ orthogonal to $v$.  Then, using \eqref{cliffcommute}, we have \begin{align*} \widehat{v}\widehat{x} \widehat{v}^{-1} &= -\widehat{v} \widehat{x} \widehat{v} \\ &= -\lambda \widehat{v}^3 - \widehat{v} \widehat{x^\bot} \widehat{v}\\ &= \lambda \widehat{v} - \widehat{v}\left(-2\langle x^\bot, v\rangle I_{16} - \widehat{v}{x^\bot}\right)\\ &= \lambda\widehat{v} + \widehat{v}^2 \widehat{x^\bot}\\ &= \lambda \widehat{v} - \widehat{x^\bot}.\end{align*}  The result follows.

\end{proof}

Consider $S^6\subseteq S^7\subseteq \mathbb{O}$ consisting of the unit length purely imaginary octonions.  Let $H$ denote the subgroup of $Cl_8$ generated by pairs of elements in $\widehat{S^6}$.  Since $\widehat{S^7}\subseteq O(16)$, it follows that $H\subseteq O(16)$.  Further, since $H$ is generated by $$H' = \left\{\widehat{v}\widehat{w} = \begin{bmatrix} L_v L_w & 0 \\ 0 & L_v L_w\end{bmatrix}: v,w\in S^6\right\},$$ we see that, in fact, $H$ is naturally a subgroup of $\Delta O(8)\subseteq O(8)\times O(8)\subseteq O(16)$.

In fact, $H$ is connected, so is a subgroup of $\Delta SO(8)\subseteq \Delta O(8)$.  To see this, recall that the group generated by a path connected subset containing the identity is path connected, and then simply note that $I = \widehat{i}\, \widehat{-i}\in H'$.

Now, consider the map $\pi:H\rightarrow SO(\widehat{Im\mathbb{O}}) = SO(7)$ given by $\pi(h)\widehat{y} = h \widehat{y} h^{-1}$.   

\begin{proposition}\label{Hidentity} The map $\pi$ is a double cover, so $H$ is isomorphic to $Spin(7)$.

\end{proposition}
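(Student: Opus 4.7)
My plan is to recognize this as the standard Clifford-algebraic construction of $Spin(7)$, by checking that $\pi$ is a well-defined surjection onto $SO(7)$ with kernel of order two.

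First, I would show $\pi$ lands in $SO(7)$. Since $H$ is generated by products $\widehat{v}\widehat{w}$ with $v,w \in S^6$, it suffices to analyze each $\pi(\widehat{v})$. For $y \in \Im\mathbb{O}$, decompose $y = \lambda v + y^\perp$; since $y$ and $v$ are both imaginary, so is $y^\perp$, and Proposition \ref{reflection} yields $\widehat{v}\widehat{y}\widehat{v}^{-1} = \widehat{\lambda v - y^\perp} \in \widehat{\Im\mathbb{O}}$. Hence $\pi$ is well-defined, and in an orthonormal basis of $\widehat{\Im\mathbb{O}}$ adapted to $\widehat{v}$ the action reads $\diag(1,-1,\ldots,-1)$ with determinant $(-1)^6 = +1$, so $\pi$ takes values in $SO(7)$.

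For surjectivity, observe that $\pi(\widehat{v}) = -R_{\widehat{v}}$, where $R_{\widehat{v}}$ is the hyperplane reflection of $\widehat{\Im\mathbb{O}}$ that sends $\widehat{v}$ to $-\widehat{v}$ and fixes $\widehat{v}^\perp$. Consequently $\pi(\widehat{v}\widehat{w}) = R_{\widehat{v}} R_{\widehat{w}}$ is a genuine product of two hyperplane reflections, and by Cartan--Dieudonn\'e every element of $SO(7)$ is a product of an even number (at most six) of such reflections, hence lies in the image of $\pi$.

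The crux is the kernel calculation. Using $H \subseteq \Delta SO(8)$, write $h = \diag(g,g)$ for some $g \in SO(8)$; then $h \in \ker \pi$ iff $g L_y g^{-1} = L_y$ for every $y \in \Im\mathbb{O}$. Evaluating $g(yx) = y\, g(x)$ at $x = 1$ gives $g(y) = y\, g(1)$, so $g$ is right multiplication $R_a$ by $a := g(1)$, with $|a|=1$. The commutation condition then reads $(yx)a = y(xa)$, i.e.\ the octonionic associator $[y,x,a]$ vanishes for all $y \in \Im\mathbb{O}$ and all $x \in \mathbb{O}$. Since the nucleus of $\mathbb{O}$ is just $\mathbb{R}$, this forces $a \in \mathbb{R}$, and combined with $|a|=1$ gives $a = \pm 1$, so $g = \pm I_8$. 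Because $(\widehat{v}\widehat{w})^2 = -I_{16}$ for orthogonal $v,w \in S^6$, both signs are realized, hence $\ker \pi = \{\pm I_{16}\} \cong \mathbb{Z}/2$. Combining these three steps, $\pi:H \to SO(7)$ is a surjective homomorphism with kernel of order two; since $H$ is connected (as already noted), $\pi$ realizes $H$ as the unique connected double cover of $SO(7)$, namely $Spin(7)$. The main obstacle is the kernel step, where the non-associativity of $\mathbb{O}$ (captured by its nucleus being $\mathbb{R}$) is used in an essential way.
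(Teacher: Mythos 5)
Your proof is correct, and its overall skeleton (image lands in $SO(7)$, surjectivity via products of two reflections, kernel of order two, connectedness) matches the paper's. The one step where you take a genuinely different route is the crux you identify: the kernel. The paper exhibits the two elements $\pm I_{16}=\widehat{i}\,\widehat{\mp i}\in H'$ to get $|\ker\pi|\geq 2$, and then bounds the kernel \emph{above} topologically, using connectedness of $H$ together with $\pi_1(SO(7))\cong\mathbb{Z}/2\mathbb{Z}$ (so a connected cover has at most two sheets). You instead compute the kernel exactly by pure algebra: writing $h=\diag(g,g)$, reducing the commutation condition to $g=R_a$ with $a=g(1)$, and invoking the fact that the nucleus of $\mathbb{O}$ is $\R$ to force $a=\pm 1$. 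Your argument is more self-contained at that step --- it does not presuppose that $\pi$ is a covering with sheet number controlled by $\pi_1$ --- and it isolates exactly where non-associativity of $\mathbb{O}$ enters; the cost is that you need the nucleus computation as input, and you still need $\pi_1(SO(7))\cong\mathbb{Z}/2\mathbb{Z}$ at the very end to identify the connected double cover with $Spin(7)$, so the topological input is deferred rather than eliminated. Your treatment of the first step also differs cosmetically: you check that conjugation by a single $\widehat{v}$ preserves $\widehat{\Im\mathbb{O}}$ and has determinant $(-1)^6=+1$ there, whereas the paper observes that $\widehat{1}$ lies in the fixed subspace of $\pi(\widehat{v}\widehat{w})$; both are fine, and your identity $\pi(\widehat{v}\widehat{w})=R_{\widehat{v}}R_{\widehat{w}}$ makes the Cartan--Dieudonn\'e surjectivity argument cleanly explicit.
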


\begin{proof}

We first show that $\pi$ has image contained in $SO(7)$.  For $h = \widehat{v}\widehat{w}\in H'$, we see that, by Proposition \ref{reflection}, conjugation by $h$ corresponds to a reflection along the $w$ axis followed by a reflection along the $v$ axis.  This fixes $\operatorname{span}\{\widehat{v},\widehat{w}\}^\bot\subseteq \widehat{\mathbb{O}}$ and rotates the plane spanned by $\widehat{v}$ and $\widehat{w}$ by twice the angle between $\widehat{v}$ and $\widehat{w}$.  Since $\widehat{1}\in \operatorname{span}\{\widehat{v},\widehat{w}\}^\bot$ for any $\widehat{v},\widehat{w} \in \widehat{S^6}$, $\pi(h)$ really is an element of $SO(7)$, so $\pi(H')\subseteq SO(7)$.  It follows that $\pi(H)\subseteq SO(7)$.

Also, $\pi$ is surjective.  This follows because $SO(7)$ is generated by pairs of reflections.

Finally, we note that $\ker \pi$ consists of precisely two elements.  To see this, note that $\widehat{i}\,\widehat{i} = -I_{16}\in H'$ and $\widehat{i}\,\widehat{-i} = I_{16}\in H'$, but $\pi(-I_{16}) = \pi(I_{16})= I$.  Thus, $\ker \pi$ contains at least $2$ elements.  On the other hand, because $H$ is connected and $\pi_1(SO(7))\cong \mathbb{Z}/2\mathbb{Z}$, $\ker \pi$ contains at most $2$ elements.

\end{proof}

With this description of $H = Spin(7)\subseteq SO(8)$, we now work towards identifying the maximal torus $T_H = T^3\subseteq Spin(7)\subseteq SO(8)$ and the projection $\pi:T_H\rightarrow T^3\subseteq SO(7)$.

\begin{proposition}\label{Hinso8} Suppose $v,w\in S^6\subseteq Im\mathbb{O}$ are independent with angle $\alpha$ between them and let $\mathbb{H}_{vw}$ denote the subalgebra of $\mathbb{O}$ generated by $v$ and $w$.  Let $0\neq x\in \mathbb{H}_{vw}^\bot\subseteq \mathbb{O}$.  Then $L_v L_w$ preserves each of the $2$-planes 

\begin{center}\begin{tabular}{lcl} $P_1 = \operatorname{span}\{1, vw \}$ & & $P_2 = P_1^\bot\subseteq \mathbb{H}_{uv}$ \\ $P_3 = L_x P_1$ & & $P_4 = L_x P_2$\end{tabular} \end{center} and rotates each $P_i$ through an angle $\alpha$.

\end{proposition}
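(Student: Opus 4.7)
The plan is to decompose $\mathbb{O}=\mathbb{H}_{vw}\oplus\mathbb{H}_{vw}^\perp$ and study $L_vL_w\in SO(\mathbb{O})$ on each 4-dimensional summand separately. By Artin's theorem, the subalgebra $\mathbb{H}_{vw}$ generated by $v$ and $w$ is associative, so $L_vL_w = L_{vw}$ on $\mathbb{H}_{vw}$; this subspace is therefore invariant, and orthogonality of $L_v L_w$ forces $\mathbb{H}_{vw}^\perp$ to be invariant as well. Since $\langle xa,b\rangle = \langle x, b\bar a\rangle = 0$ for $a,b \in \mathbb{H}_{vw}$, $L_x$ maps $\mathbb{H}_{vw}$ into, and by dimension onto, $\mathbb{H}_{vw}^\perp$, placing $P_3, P_4 \subseteq \mathbb{H}_{vw}^\perp$.

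For the analysis on $\mathbb{H}_{vw}$, I would write $vw = -\cos\alpha + n$, where $n = v\times w$ is purely imaginary with $|n| = \sin\alpha$, and set $\hat n = n/|n|$. A basic quaternionic fact is that for any nonzero $p \in \mathbb{H}_{vw}$, left multiplication by any element of $\text{span}\{1,\hat n\}$ preserves the 2-plane $\text{span}\{p,\hat n p\}$ and acts on it as a plane rotation (with trace $-2\cos\alpha$ when the element is $vw$). Taking $p=1$ recovers $P_1 = \text{span}\{1,\hat n\} = \text{span}\{1,vw\}$. Taking $p=v$, the identities $v^2=-1$ and $wv=-vw-2\cos\alpha$ give $\hat n v = (w - \cos\alpha\, v)/\sin\alpha$, so $\text{span}\{v,\hat n v\} = \text{span}\{v,w\} = P_2$.

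For the analysis on $\mathbb{H}_{vw}^\perp$, I would pass to the Cayley--Dickson description relative to $\hat x = x/|x|$, namely $\mathbb{O} = \mathbb{H}_{vw}\oplus\mathbb{H}_{vw}\hat x$ with $(a+b\hat x)(c+d\hat x) = (ac-\bar d b) + (da + b\bar c)\hat x$. Two applications of this formula yield $L_vL_w(a\hat x) = (awv)\hat x$, so under the identification $a\hat x \leftrightarrow a$, $L_vL_w|_{\mathbb{H}_{vw}^\perp}$ becomes right multiplication by $wv = \overline{vw} \in \text{span}\{1,\hat n\}$. Right multiplication by any element of $\text{span}\{1,\hat n\}$ preserves every plane of the form $\text{span}\{p, p\hat n\}$ and rotates it. Since $P_1 = \text{span}\{1,\hat n\}$ and, using $v\hat n = -\hat n v$ (orthogonal imaginary units anticommute), $P_2 = \text{span}\{v,v\hat n\}$, these planes are both invariant under right multiplication by $wv$. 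Translating back through $L_x$, this shows $P_3 = L_x P_1$ and $P_4 = L_x P_2$ are invariant under $L_vL_w$ and rotated through the same angle as $P_1$ and $P_2$.

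The main obstacle I anticipate is managing non-associativity on $\mathbb{H}_{vw}^\perp$, where $L_vL_w \neq L_{vw}$ and one cannot simply invoke left multiplication by $vw$. The explicit Cayley--Dickson multiplication formula is precisely what dissolves this difficulty, converting the problem into a right-multiplication question on $\mathbb{H}_{vw}$ that is structurally parallel to the left-multiplication analysis on $\mathbb{H}_{vw}$ itself.
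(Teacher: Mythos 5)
Your proof is correct, and for $P_1,P_2$ it is essentially the paper's argument: reduce to the associative subalgebra $\mathbb{H}_{vw}\cong\mathbb{H}$ so that $L_vL_w=L_{vw}$ there, then observe that left multiplication by $vw\in\operatorname{span}\{1,\hat n\}$ rotates $\operatorname{span}\{1,\hat n\}$ and $\operatorname{span}\{v,w\}$ (the paper just normalizes $v=i$, $w=\cos\alpha\,i+\sin\alpha\,j$ by an automorphism instead of working invariantly with $\hat n$). Where you genuinely diverge is the treatment of $P_3,P_4$. The paper stays inside the Clifford algebra: from \eqref{cliffcommute} it reads off that $L_v$ and $L_x$ (and likewise $L_w$ and $L_x$) anticommute, hence $L_vL_wL_x=L_xL_vL_w$, so the action on $L_xP_i$ is the conjugate by the isometry $L_x$ of the action on $P_i$ and nothing further needs to be computed. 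You instead invoke the Cayley--Dickson doubling $\mathbb{O}=\mathbb{H}_{vw}\oplus\mathbb{H}_{vw}\hat x$ to identify $L_vL_w$ on the complement with right multiplication by $wv=\overline{vw}$. Both routes work; the paper's is shorter and uses only machinery already on the page, while yours yields strictly more information (an explicit formula for $L_vL_w|_{\mathbb{H}_{vw}^\perp}$). Two points you should make explicit in a final write-up: (i) the doubling formula for an \emph{arbitrary} quaternion subalgebra $\mathbb{H}_{vw}$ and arbitrary unit $\hat x\perp\mathbb{H}_{vw}$ is not quite what the paper states (it gives the formula only for the standard $\mathbb{H}+\mathbb{H}l$), so you need a one-line appeal to the transitivity of $\operatorname{Aut}(\mathbb{O})=\mathbf{G}_2$ on basic triples, or a citation; and (ii) $P_3,P_4$ are defined by \emph{left} multiplication by $x$, whereas your identification writes the complement as \emph{right} multiples $a\hat x$ --- these give the same planes because $xp=\bar p\,x$ for $p\in\mathbb{H}_{vw}$ and imaginary $x\perp\mathbb{H}_{vw}$, and $P_1,P_2$ are stable under quaternionic conjugation, but that deserves a sentence. (Incidentally, your trace computation $-2\cos\alpha$ pins the rotation angle as $\pi-\alpha$ rather than $\alpha$; the paper has the same sign ambiguity in its normalized computation and absorbs it later by choosing $w$ with a minus sign, so this does not affect the applications, which only use that all four planes turn through a common angle.)
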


\begin{proof}

Since $v$ and $w$ are purely imaginary and independent, $\mathbb{H}_{vw}\cong \mathbb{H}$.  In particular, on $P_1, P_2\subseteq \mathbb{H}_{vw}$, $L_v L_w = L_{vw}$.  Because the automorphism group of $\mathbb{H}$ acts transitively on the set of pairs of orthogonal unit length purely imaginary quaternions, we may assume $v = i$, $w = \cos(\alpha) i + \sin(\alpha) j$, and so, $vw = -\cos(\alpha) +  \sin(\alpha) k$.  Now a simple calculation shows that $L_{vw}$ rotates $P_1 =\operatorname{span}\{ 1, k\}$ and $P_2 = \operatorname{span}\{ i,j\}$ by an angle of $\alpha$.

For $P_3$ and $P_4$, since $x$ and $v$ are orthogonal, we see from \eqref{cliffcommute} that $\widehat{v}\widehat{x} = - \widehat{x}\widehat{v}$.  The top left $8\times 8$ block of the equation $\widehat{v}\widehat{x} = -\widehat{x}\widehat{v}$ is $-L_{\overline{v}}L_x =  L_{\overline{x}} L_v$.  Since $v$ and $x$ are purely imaginary, this shows $L_v$ and $L_x$ anti-commute.  Similarly, $L_w$ and $L_x$ anti-commute.

It follows that $L_vL_w P_3 = L_v L_w L_x P_1 = L_x L_v L_w P_1$ is a rotation by angle $\alpha$ as well, and similarly for $P_4 = L_x P_2$.

\end{proof}

For $n=1,2,3$, we set $v = e_{2n-1}$ and $w= -(cos(\theta)e_{2n-1} +\sin(\theta) e_{2n})$, so $\widehat{v}\widehat{w}\in H$.  From the proof of Proposition \ref{Hidentity}, we know that $\pi(\widehat{v}\widehat{w}) = \pi(-\widehat{v} \cdot \widehat{w})$ rotates $e_{2n-1}$ towards $e_{2n}$ by an angle $2\theta$.  On the other hand, using Proposition \ref{Hinso8}, we may compute the matrix form of $L_v L_w \in H\subseteq SO(8)$.

We work this out in detail when $n = 3$, so $$v = e_5 = il \text{ and } w = -(\cos(\gamma)(il) + \sin(\gamma)(jl)).$$  Then, $P_1 = \operatorname{span}\{1, k\}$.  Then $L_v L_w 1 = \cos(\theta) + \sin(\gamma) k$, so $L_v L_w$ rotates $1$ towards $k$.

On $P_2 = \operatorname{span}\{il, jl\}$, we compute \begin{align*} L_v L_w (il) &= -(il)(\cos(\gamma)(il)^2 + \sin(\gamma)(jl)(il)) \\ &= -(il)(-\cos(\gamma) +\sin(\gamma)k) = \cos(\gamma)(il) - \sin(\gamma)(il)k\\ &= \cos(\gamma)(il) -\sin(\gamma)(jl),\end{align*} so $L_vL_w$ rotates $il$ towards $-jl$.

On $P_3 = i\operatorname{span}\{1,k\} = \operatorname{span}\{i, j\}$, we compute \begin{align*} L_v L_w i &= -(il)(\cos(\gamma)(il)i + \sin(\gamma)(jl)i)\\ &= -(il)(\cos(\gamma)l + \sin(\gamma)kl)\\ &= -\cos(\gamma) (il)l -\sin(\gamma)(il)(kl)\\ &= \cos(\gamma)i -\sin(\gamma)j.\end{align*}  Thus, $L_vL_w$ rotates $i$ towards $-j$.

Finally, on $P_4 = i\operatorname{span}\{il , jl\} = \operatorname{span}\{l, kl \}$< we have \begin{align*}  L_v L_w(l) &= -(il)(\cos(\gamma)(il)l + \sin(\gamma)(jl)l)\\ &= -(il)(-\cos(\gamma)i -\sin(\gamma)j)\\ &= \cos(\gamma)(il)i + \sin(\gamma)(il)j\\ &= \cos(\gamma) l -\sin(\gamma)kl.\end{align*}  Thus, $L_v L_w$ rotates $l$ towards $-kl$.  

Putting this all together, it follows that when $$v = il \text{ and } w = -(\cos(\gamma) (il) + \sin(\gamma)(jl)),$$ that $L_v L_w$ has the matrix form $$ A_3 = \begin{bmatrix} \cos\gamma & & & -\sin\gamma & & & & \\ & \cos \gamma & \sin\gamma &  & & & &\\ & -\sin\gamma & \cos\gamma & & & & & \\ \sin\gamma & & &\cos\gamma & & & & \\ & & & & \cos\gamma & & & \sin\gamma \\ & & & & & \cos\gamma & \sin\gamma & \\ & & & & & -\sin\gamma & \cos\gamma & \\ & & & & -\sin\gamma & & & \cos\gamma \end{bmatrix}.$$ 

We let $R(\theta)$ denote the standard rotation matrix, $R(\theta) = \begin{bmatrix} \cos\theta & -\sin\theta \\ \sin\theta & \cos\theta\end{bmatrix}$, and we use the shorthand $R(\theta_1,..., \theta_k)$ to denote the block diagonal matrix $\diag(R(\theta_1), R(\theta_2), ..., R(\theta_k) )$  or $(R(\theta_1), ..., R(\theta_k),1)$ as appropriate.  In this notation, we have now proven that $\pi(A_3) = R(0,0,2\gamma)\in SO(7)$.

In an analogous fashion, one can show that when $v = i$ and $w = -(\cos(\alpha) i + \sin(\alpha) j)$, then $L_v L_w$ has the matrix form $$A_1 = \begin{bmatrix} \cos\alpha & & & \sin\alpha & & & & \\ & \cos \alpha & \sin\alpha &  & & & &\\ & -\sin\alpha & \cos\alpha & & & & & \\ -\sin\alpha & & &\cos\alpha & & & & \\ & & & & \cos\alpha & & & -\sin\alpha \\ & & & & & \cos\alpha & \sin\alpha & \\ & & & & & -\sin\alpha & \cos\alpha & \\ & & & & \sin\alpha & & & \cos\alpha \end{bmatrix}$$  with $\pi(A_1) = R(2\alpha, 0,0)$.  In addition, when $v = k$ and $w = -(\cos(\beta) k + \sin(\beta)l)$, then $L_v L_w$ has the matrix form $$A_2 = \begin{bmatrix} \cos\beta & & & & & & & \sin\beta\\ & \cos\beta & & & & & \sin\beta & \\  & & \cos\beta & & & -\sin\beta & & \\ & & & \cos\beta & \sin\beta & & & \\ & & & -\sin\beta & \cos\beta & & & \\ & & \sin\beta & & & \cos\beta & & \\ & -\sin\beta & & & & & \cos \beta & \\ -\sin\beta & & & & & & & \cos\beta \end{bmatrix}$$ with $\pi(A_2) = R(0,2\beta,0)$.

One can easily verify that the $A_i$ matrices commute, so they can be simultaneously conjugated to the standard maximal torus of $SO(8)$.  In fact, if we set $B = \begin{bmatrix} 0&0&0&1&0&0&0&1\\ 1&0&0&0&-1&0&0&0\\ 0&0&0&1&0&0&0&-1 \\ 1&0&0&0&1&0&0&0\\ 0&1&0&0&0&-1&0
&0\\  0&0&-1&0&0&0&1&0\\ 0&1&0&0&0&1&0&0\\ 0&0&-1&0&0&0&-1&0 \end{bmatrix},$ it is easy to verify that $BA_1 A_2 A_3 B^{-1} = R(\theta_1,\theta_2,\theta_3,\theta_4)$ with \begin{align*} \theta_1 = \alpha+\beta - \gamma &  & \theta_2 = \alpha-\beta -\gamma & \\ \theta_3 = \alpha-\beta + \gamma &  & \theta_4 = \alpha+\beta+\gamma &.\end{align*}  Thus, we have proven the following.

\begin{proposition}  Up to conjugacy, the maximal torus of $H$, $T_H\subseteq H= Spin(7)\subseteq SO(8)$ is given as $R(\theta_1,\theta_2,\theta_3,\theta_4)$ with $\theta_i$ as above.  In addition, the projection $\pi:H\rightarrow SO(7)$ maps $R(\theta_1,\theta_2,\theta_3,\theta_4)$ to $R(2\alpha, 2\beta, 2\gamma)$.

\end{proposition}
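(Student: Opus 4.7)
The plan is to show that the three commuting families $\{A_1(\alpha), A_2(\beta), A_3(\gamma)\}$ constructed above generate a rank-$3$ torus $T \subseteq H$, which is therefore maximal, and then to verify the explicit conjugation and projection formulas by direct calculation.

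First, I would verify pairwise commutativity of the $A_i$'s by a direct $8\times 8$ matrix multiplication on the forms already displayed. Each $A_i$ is a continuous one-parameter family starting at $I_8$ (obtained by setting its angle to $0$), so together they generate a connected abelian subgroup $T \subseteq H$ with Lie algebra spanned by the three velocity vectors $\tfrac{d}{d\alpha}A_1|_{\alpha=0}$, $\tfrac{d}{d\beta}A_2|_{\beta=0}$, $\tfrac{d}{d\gamma}A_3|_{\gamma=0}$. These vectors are linearly independent in $\mathfrak{h}$ because $d\pi$ maps them to three infinitesimal rotations in three distinct coordinate $2$-planes of $\mathfrak{so}(7)$, and those are manifestly independent. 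Thus $\dim T = 3$, and since $\operatorname{rank} Spin(7) = 3$, $T$ is a maximal torus of $H$.

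Second, the projection formula is immediate from $\pi$ being a homomorphism: using the explicit $\pi(A_i)$ values already established,
\[
\pi(A_1 A_2 A_3) = \pi(A_1)\pi(A_2)\pi(A_3) = R(2\alpha,0,0)\,R(0,2\beta,0)\,R(0,0,2\gamma) = R(2\alpha,2\beta,2\gamma).
\]
The simultaneous diagonalization $B A_1 A_2 A_3 B^{-1} = R(\theta_1, \theta_2, \theta_3, \theta_4)$ with $\theta_i = \alpha \pm \beta \pm \gamma$ as stated is then a direct matrix calculation, given the explicit form of $B$ and the $A_i$'s.

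The main obstacle is just the bookkeeping in this last calculation. The product $A_1 A_2 A_3$ is a dense $8 \times 8$ matrix whose entries are trilinear in $\cos\alpha, \sin\alpha, \cos\beta, \sin\beta, \cos\gamma, \sin\gamma$, and correctly tracking the signs in each $\theta_i$ requires care. Conceptually the outcome is the expected one: the weights of the spin representation $Spin(7)\to SO(8)$ are $\tfrac{1}{2}(\pm\mu_1 \pm \mu_2 \pm \mu_3)$, where $\mu_i$ are the weights of the vector representation of $SO(7)$, so the four $\theta_i$'s must be precisely the four sign patterns $\alpha \pm \beta \pm \gamma$, and the only thing the explicit computation determines is which sign pattern goes in which of the four slots.
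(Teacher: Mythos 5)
Your proposal is correct and follows essentially the same route as the paper: it relies on the explicit commuting matrices $A_1,A_2,A_3$, the already-computed values $\pi(A_i)$, and the conjugation by $B$, with the diagonalization left as a direct (if tedious) matrix check exactly as the paper does. The only additions are the explicit dimension-count argument that the generated torus is maximal (which the paper leaves implicit) and the weight-theoretic sanity check on the sign patterns $\alpha\pm\beta\pm\gamma$; both are sound.
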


\

Because checking whether a biquotient is effectively free reduces to checking conjugacy of elements in a maximal torus, we must determine when two elements of $T_{Spin(7)}\subseteq Spin(7)\subseteq SO(8)$ are conjugate in $Spin(7)$.  To that end, recall the maximal torus of $SO(8)$ consists of elements of the form $R(\lambda_1, \lambda_2, \lambda_3, \lambda_4)$ and the Weyl group $W_{SO(8)}$ acts by arbitrary permutations of the $\lambda_i$ together with an even number of sign changes of the $\lambda_i$.

We also note that, with $\theta_i$ defined as above, that $\theta_1 + \theta_3 = \theta_2 + \theta_4$.  The Weyl group of $Spin(7)$, $W_{Spin(7)}$, acts as arbitrary permutations of $(\alpha, \beta, \gamma)$ as well as an arbitrary number of sign changes.

\begin{proposition}\label{spinconjugate}  Two elements of $T_H\subseteq H\subseteq SO(8)$, defined by the equation $\theta_1+\theta_3 = \theta_2+\theta_4$, are conjugate in $Spin(7)$ iff there is an element in $W_{SO(8)}$ which preserves $T$ and maps the first element to the second.

\end{proposition}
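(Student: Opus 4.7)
The plan is to identify the action of $W_{Spin(7)}$ on $T_H$ with the action of the stabilizer of $T_H$ in $W_{SO(8)}$, so that conjugacy in $Spin(7)$ (which is detected by $W_{Spin(7)}$) is exactly conjugacy by the $W_{SO(8)}$-stabilizer of $T_H$.

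For the forward direction, by standard Weyl theory, $x, y \in T_H$ are conjugate in $Spin(7)$ iff $w \cdot x = y$ for some $w \in W_{Spin(7)} = (\mathbb{Z}/2)^3 \rtimes S_3$, generated by transpositions and individual sign changes of $\alpha, \beta, \gamma$. Feeding each generator through the linear formulas expressing $\theta_i$ in terms of $(\alpha, \beta, \gamma)$ shows that its action on $(\theta_1, \theta_2, \theta_3, \theta_4)$ is a signed permutation with an even number of sign changes; for instance $\alpha \leftrightarrow \beta$ becomes $(\theta_1, -\theta_3, -\theta_2, \theta_4)$ (two sign changes), and $\alpha \mapsto -\alpha$ becomes $(-\theta_3, -\theta_4, -\theta_1, -\theta_2)$ (four sign changes). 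Hence each generator is realized by an element of $W_{SO(8)}$ that visibly preserves $T_H$.

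For the reverse direction, I would count the stabilizer of $T_H$ in $W_{SO(8)}$. Since $T_H$ is the kernel of the functional $\theta_1 - \theta_2 + \theta_3 - \theta_4$, this stabilizer equals the $W_{SO(8)}$-stabilizer of the line spanned by $u = (1, -1, 1, -1) \in \mathbb{R}^4$. A direct check shows that for each $\sigma \in S_4$ there are exactly two sign vectors $\epsilon \in \{\pm 1\}^4$ (one sending $u$ to $u$, one to $-u$) with $(\sigma, \epsilon) \cdot u = \pm u$, and a parity computation forces these $\epsilon$ to have an even number of $-1$ entries, so they lie in $W_{SO(8)}$. This yields a stabilizer of order $2 \cdot 24 = 48$. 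Since $|W_{Spin(7)}| = 48$, since $W_{Spin(7)} \to \operatorname{Aut}(T_H)$ is injective (Weyl groups act faithfully on their maximal tori), and since the image in $\operatorname{Aut}(T_H)$ of the $W_{SO(8)}$-stabilizer has size at most $48$, the forward direction already forces the two images in $\operatorname{Aut}(T_H)$ to coincide. Thus any $W_{SO(8)}$ element preserving $T_H$ and sending $x$ to $y$ agrees on $T_H$ with some $w \in W_{Spin(7)}$, so $x$ and $y$ are conjugate in $Spin(7)$ via a preimage of $w$ in $N_{Spin(7)}(T_H)$.

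The only substantive step is the count of the $W_{SO(8)}$-stabilizer of $u$; once its order is pinned at $48$ and matched with $|W_{Spin(7)}|$, everything else reduces to the explicit but routine computation of how each of the five generators of $W_{Spin(7)}$ acts on $(\theta_1, \theta_2, \theta_3, \theta_4)$.
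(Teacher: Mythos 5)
Your proof is correct and follows essentially the same strategy as the paper: verify on generators that every element of $W_{Spin(7)}$ acts on $T_H$ as the restriction of a $T_H$-preserving element of $W_{SO(8)}$, then show the $W_{SO(8)}$-stabilizer of $T_H$ has order exactly $48=|W_{Spin(7)}|$ so that the containment of images in $\operatorname{Aut}(T_H)$ is an equality. The only (minor) divergence is in the counting step: you enumerate the stabilizer of the line $\mathbb{R}(1,-1,1,-1)$ directly, whereas the paper bounds the stabilizer from above via the orbit--stabilizer theorem applied to the $W_{SO(8)}$-orbit of $T_H$ among rank-$3$ subtori; both yield $48$.
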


\begin{proof}

We first note that the action of every element of $W_{Spin(7)}$ on $T$ is the restriction of the action of some element in $W_{SO(8)}$ which preserves $T$.  To see this, note it is enough to check it on a generating set of $W_{Spin(7)}$.  The element which interchanges $\alpha$ and $\beta$ is the restriction of the element of $W_{SO(8)}$ which interchanges $\theta_2$ and $\theta_3$ and negates them both, so preserves $T$.  Similarly, interchanging $\beta$ and $\gamma$ corresponds to swapping $\theta_1$ and $\theta_3$.  Finally, the element of $W_{Spin(7)}$ which negates $\beta$ corresponds to simultaneously interchanging $\theta_1$ and $\theta_2$ and interchanging $\theta_3$ and $\theta_4$.  It is easy to see that these elements generate all of $W_{Spin(7)}$, and so this proves the ``only if'' direction.

To prove the ``if'' direction, we first note that $|W_{Spin(7)}| = 48$, so at least $48$ elements of $W_{SO(8)}$ preserve $T$.  We now prove there are no more.

Consider the action of $W_{SO(8)}$ on the set of all rank $3$ sub-tori of the maximal torus of $SO(8)$.  Because $W_{SO(8)}$ acts as an arbitrary permutation of the $\theta_i$, the orbit of the maximal torus of $Spin(7)\subseteq SO(8)$ contains at least the $3$ tori defined by the equations $\theta_1 + \theta_3 = \theta_2 + \theta_4$, $\theta_1 + \theta_2 = \theta_3 + \theta_4$, and $\theta_1+\theta_4 = \theta_2+\theta_3$.  In addition, because $W_{SO(8)}$ also acts by changing an even number of signs, the torus defined by the equations $-\theta_1-\theta_3 = \theta_2 + \theta_4 $ is also in the orbit.  By the orbit-stabilizer theorem, we have $|\text{Orbit}||\text{Stabilizer}| = |W_{SO(8)}| = 192$.  Since we have already shown the stabilizer has a subgroup of size $48$ and that the order of the orbit is at least $4$, it follows that the order of the stabilizer is precisely $48$.

\end{proof}

\section{\texorpdfstring{Biquotients of the form $SO(7)\bq SU(2)^2$ and $Spin(7)\bq SU(2)^2$}{Biquotients of the form SO(7)/SU(2)SU(2) and Spin(7)/SU(2)SU(2)}}

In this section, we prove Theorem \ref{main} in case of $G = SO(7)$ and $G = Spin(7)$

We begin by listing all homomorphisms, up to equivalence, from $SU(2)$ and $SU(2)^2$ into $SO(7)$.  Because $SU(2)$ is simply connected, every such homomorphism lifts to $Spin(7)$.

To determine which give rise to effectively free biquotient actions, we first classify all effectively free biquotient actions of $SU(2)$ on $Spin(7)$, finding precisely $12$.  We then check directly that all $12$ of these descend to effectively free actions of $SU(2)$ on $SO(7)$.

We use the classification of biquotients of the form $Spin(7)\bq SU(2)$ and Lemma \ref{restest} to classify biquotients of the form $Spin(7)\bq SU(2)^2$.  It follows from the classification that, with one exception, for each effectively free biquotient action of $SU(2)^2$ on $Spin(7)$, either the point $(-I,I)$ or $(I,-I)\in H^2\subseteq SO(8)^2$ is in the image of $SU(2)^2$.  Proposition \ref{easyhalf} then implies that each of these actions descends to an effectively free action of $SU(2)^2$ on $SO(7)$.  The exceptional case is easily checked.

\

Classifying homomorphisms from $SU(2)$ into $SO(7)$ is simply classifying all orthogonal $7$-dimensional representations of $SU(2)$.  To begin with, we note there is a natural bijection between partitions of $7$ and $7$-dimensional representations of $SU(2)$.  As mentioned in Section 2, Proposition \ref{symp} implies that a sum of representations of $SU(2)$ is orthogonal if and only if each symplectic $\phi_i$, that is, those with $i$ odd, appears with even multiplicity.  Thus, we seek partitions in which every even number appears an even number of times.  Compiling these, we obtain Table \ref{table:so7homo}.

\begin{table}[ht]

\caption{Nontrivial homomorphisms from $SU(2)$ into $SO(7)$}\label{table:so7homo}

\begin{center}

\begin{tabular}{|c|c|}

\hline

Representation & Image of $T_{SU(2)}\subseteq SO(7)$\\

\hline

$4\phi_0 + \phi_2$ & $R(2\theta,0,0) $\\

$3\phi_0 + 2\phi_1$ & $R(\theta,\theta, 0)$\\

$2\phi_0 + \phi_4$ & $R(4\theta,2\theta,0) $\\

$\phi_0 + 2\phi_2$ & $R(2\theta,2\theta,0)$\\

$2\phi_1 + \phi_2$ & $R(2\theta, \theta, \theta) $\\

$\phi_6$ & $R(6\theta, 4\theta, 2\theta) $\\

\hline

\end{tabular}

\end{center}

\end{table}

We recorded the image of the maximal torus, which is actually a subset of $SO(6)$, as this will be essential for determining whether a given biquotient action is effectively free or not.

For each entry in Table \ref{table:so7homo}, after lifting to $Spin(7)$ and including this into $SO(8)$, we obtain an $8$-dimensional orthogonal representation of $SU(2)$.

For example, one can easily verify that if one chooses $\alpha = \theta, \beta = \gamma = 0$, then image of the maximal torus of $SU(2)$ is, in this case, $R( \alpha, \alpha,\alpha,\alpha)\in Spin(7)\subseteq SO(8)$ which projects to $\diag(R(2\alpha), 1,1,1,1,1)\in SO(7)$.  Hence, the lift of $4\phi_0 + \phi_2$ is, up to conjugacy, $4\phi_1$.  Continuing in this fashion, we obtain Table \ref{table:su2spin}.

\begin{table}[ht]

\caption{Lifts of homomorphisms from $SU(2)$ to $SO(7)$ into $Spin(7)\subseteq SO(8)$}\label{table:su2spin}

\begin{center}

\begin{tabular}{|c|c||c|c|c|c|}

\hline

Name &Representation &  $\alpha$ & $\beta$ & $\gamma$ & Image of $T_{SU(2)}\subseteq H\subseteq SO(8)$ \\

\hline\hline

$A$ & $4\phi_0 + \phi_2$  &  $\theta$ & $0$ & $0$ & $R(\theta,\theta,\theta,\theta) $ \\

$B$ & $3\phi_0 + 2\phi_1$ &  $\frac{1}{2}\theta$ & $\frac{1}{2}\theta$ & $0$ & $R(\theta, 0,0,\theta)$ \\

$C$ & $2\phi_0 + \phi_4$ & $2\theta$ & $\theta$ & $0$ & $R(3\theta, \theta, \theta, 3\theta)$ \\

$D$ & $\phi_0 + 2\phi_2$ & $\theta$ & $\theta$ & $0$ & $R(2\theta, 0,0,2\theta) $ \\

$E$ & $2\phi_1 + \phi_2$ & $\frac{1}{2}\theta$ & $\frac{1}{2}\theta$ & $\theta$ & $R(0,-\theta,\theta,2\theta) $\\

$F$ & $\phi_6$ & $\theta$ & $2\theta$ & $3\theta$ & $ R(0,-4\theta, 2\theta, 6\theta) $\\ 

\hline

\end{tabular}

\end{center}

\end{table}

In a similar fashion, one can classify all homomorphisms, up to equivalence, from $U = SU(2)^2$ to $SO(7)$.   Recalling that the dimension of $\phi_{ij}$ is $(i+1)(j+1)$, one first tabulates a list of all representations of $U$ of total dimension $7$.  Since orthogonal representations correspond to those representations for which every $\phi_{ij}$ with $i$ and $j$ of different parities appears with even multiplicity, one easily finds all $7$-dimensional orthogonal representations of $SU(2)^2$.  Table \ref{table:so7partition} records all of the representations with finite kernel up to interchanging the two $SU(2)$ factors; those with infinite kernel are a composition $SU(2)^2\rightarrow SU(2)\rightarrow SO(7)$ where the first map is one of the two projection maps and the second is listed in Table \ref{table:su2spin}.

\begin{table}[ht]

\caption{Immersions from $SU(2)^2$ into $SO(7)$ and $Spin(7)\subseteq SO(8)$}\label{table:so7partition}

\begin{center}

\begin{tabular}{|c|c|c|}

\hline

Representation & Image of  $T_U\subseteq SO(7)$ & Lift $T_U\subseteq H\subseteq SO(8)$  \\

\hline

$3\phi_{00} + \phi_{11}$ & $R(\theta + \phi, \theta -\phi, 0) $ & $ R(\theta, \phi,\phi,\theta)  $\\

$\phi_{00} + \phi_{20} + \phi_{02}$ & $R(2\theta, 2\phi, 0) $ & $ R(\theta + \phi, \theta - \phi, \theta - \phi, \theta + \phi) $\\

$2\phi_{10} + \phi_{02}$ & $R(2\phi, \theta, \theta) $ & $ R(\phi, \phi-\theta, \phi, \theta + \phi)  $\\

$\phi_{11} + \phi_{02}$ & $R(2\phi , \theta + \phi, \theta - \phi) $ & $ R(\theta + \phi, 0, \phi - \theta, 2\phi) $\\

\hline

\end{tabular}

\end{center}

\end{table}

We now investigate which pairs of homomorphisms give rise to effectively free actions.  We begin with the homomorphisms with domain $SU(2)$, found in Table \ref{table:so7homo}.

\begin{proposition}\label{so7class}

Suppose $f=(f_1,f_2):SU(2)\rightarrow Spin(7)^2\subseteq SO(8)^2$ with both $f_1$ and $f_2$ nontrivial.  Then $f$ induces an effectively free biquotient action of $SU(2)$ on $Spin(7)$ if and only if up to order, we have $$(f_1,f_2) \in \{(A,B), (A,D), (A,E), (A,F), (C,E), (D,E) \}$$
\end{proposition}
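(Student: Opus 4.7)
The plan is to apply Proposition \ref{free} in concert with the reduction to the maximal torus: the action of $SU(2)$ on $Spin(7)$ determined by $(f_1,f_2)$ is effectively free iff, for every $\theta\in T_{SU(2)}$, whenever $f_1(\theta)$ and $f_2(\theta)$ are conjugate in $Spin(7)$ we have $f_1(\theta)=f_2(\theta)\in Z(Spin(7))=\{\pm I\}$. Table \ref{table:su2spin} provides, for each of the six nontrivial representations $A,B,C,D,E,F$, an explicit linear parametrization $\theta\mapsto(\alpha_i(\theta),\beta_i(\theta),\gamma_i(\theta))$ of the image of $T_{SU(2)}$ in $T_H$, and by Proposition \ref{spinconjugate} conjugacy in $Spin(7)$ of two such elements amounts to equivalence of their triples under the Weyl group $W_{B_3}$ (arbitrary permutations and sign changes) modulo the kernel lattice $\Lambda_H=\{(a,b,c): a,b,c\in\pi\Z,\ a+b+c\in 2\pi\Z\}$ of the map $\R^3\to T_H$.

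The six ``diagonal'' pairs $(X,X)$ are eliminated immediately: $f_1(\theta)=f_2(\theta)$ is conjugate to itself for every $\theta$, so effective freeness would require the entire image to lie in $Z(Spin(7))$, which is absurd for a nontrivial representation. This leaves $\binom{6}{2}=15$ unordered pairs $(X,Y)$ with $X\neq Y$, each of which I would treat in turn. For each pair, I would first compare the $SO(8)$ images in the fourth column of Table \ref{table:su2spin} as functions of $\theta$: if their characteristic polynomials agree identically then $f_1(\theta)\sim f_2(\theta)$ for all $\theta$ and effective freeness fails; otherwise, the $W_{B_3}$-equivalence modulo $\Lambda_H$ can hold only at a finite list of $\theta$, which I check against $Z(Spin(7))$.

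Carrying this out, the six pairs $(A,B),(A,D),(A,E),(A,F),(C,E),(D,E)$ should turn out to be exactly those for which the only solutions $\theta$ to the equivalence satisfy $f_1(\theta)=f_2(\theta)\in\{\pm I\}$; for each of the other nine pairs, a single explicit $\theta$ (typically a small rational multiple of $\pi$) exhibits $f_1(\theta)$ and $f_2(\theta)$ as conjugate but non-central elements of $Spin(7)$, defeating effective freeness. Lemma \ref{restest} imposes no further restrictions beyond what is already reflected in Table \ref{table:su2spin}, since every individual $f_i$ in that table tautologically defines an effectively free action (the ``mate'' is trivial).

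The main obstacle will be the careful handling of $\Lambda_H$. Because $\Lambda_H$ is of $D_3$-type, it contains non-coordinatewise torsion elements such as $(\pi,\pi,0)$, $(\pi,0,\pi)$, and $(0,\pi,\pi)$, which can create unexpected identifications between parameter triples — particularly in the pairs involving the half-integer parametrizations of $B$ and $E$. Reducing modulo $\Lambda_H$ (rather than merely modulo $2\pi$-periodicity in each coordinate) is essential; once this subtlety is under control, each of the fifteen cases reduces to a short, explicit calculation comparing the two linear-in-$\theta$ triples up to the 48-element symmetry group $W_{B_3}$ and the lattice translations of $\Lambda_H$.
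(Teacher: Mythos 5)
Your proposal follows essentially the same route as the paper: reduce to the maximal torus via Proposition \ref{free}, use Proposition \ref{spinconjugate} to decide $Spin(7)$-conjugacy, and run through the $\binom{6}{2}=15$ off-diagonal pairs from Table \ref{table:su2spin}. Your bookkeeping in the $(\alpha,\beta,\gamma)$ coordinates with $W_{B_3}$ acting modulo the kernel lattice $\Lambda_H$ is an equivalent (and correctly identified) reformulation of the paper's criterion, which instead works directly with $(\theta_1,\dots,\theta_4)$ and the $48$-element stabilizer of $T_H$ inside $W_{SO(8)}$.

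One step is logically flawed, though harmlessly so here: identical characteristic polynomials only give conjugacy of $f_1(\theta)$ and $f_2(\theta)$ in $O(8)$ (or $SO(8)$), not in $Spin(7)$ --- distinguishing these is the entire point of Proposition \ref{spinconjugate}, whose proof shows the $W_{SO(8)}$-orbit of $T_H$ has four distinct tori, so $SO(8)$-conjugacy of elements of $T_H$ genuinely does not imply $Spin(7)$-conjugacy. Since no two of the six parametrized families $A$--$F$ have identically matching angle multisets, this shortcut never actually fires, but you should delete it or replace it with the correct criterion before relying on it.
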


\begin{proof}

Recall that a biquotient action defined by $(f_1, f_2)$ is effectively free if and only if for all $z \in T_{SU(2)}$, if $f_1(z)$ is conjugate to $f_2(z)$ in $G$, then $f_1(z) = f_2(z) \in Z(G)$. It immediately follows that $f_1$ and $f_2$ must be distinct, and that if either $f_1$ or $f_2$ is the trivial homomorphism, then the action is automatically free, which accounts for $6$ homogeneous examples.  This leaves $\binom{6}{2} = 15$ pairs of homomorphisms to check.  We present a few of the relevant calculations.

As shown in Proposition \ref{spinconjugate}, two elements in $T\subseteq Spin(7)$ are conjugate in $Spin(7)$ iff they are conjugate in $SO(8)$ by an element which preserves $T$.  We recall that the Weyl group acts on an element $R(\theta_1,...,\theta_4)$ in the maximal torus of $SO(8)$ by arbitrary permutations and an even number of sign changes.

For the pair $(D, E)$, the two images of the maximal tori are $R(2\theta, 0,0,2\theta)$ and $R(0,-\theta, \theta, 2\theta)$.  If, up to permutations and an even number of sign changes, these are equal, then we must either have $\theta = 0$ or $2\theta = 0$.  Of course, the first case only arises when both matrices are the identity matrix, so we focus on the second case.  If $2\theta = 0$, the first matrix becomes $I$, which then forces the second to be $I$ as well.  Hence, these two elements are conjugate iff they are both the identity, and so the action is effectively free.

On the other hand, the pair $(C,D)$ does not give rise to an effectively free action.  The images of the two maximal tori are $R(3\theta, \theta, \theta, 3\theta)$ and $R(2\theta,0,0,2\theta)$.  Choosing $\theta = 2\pi /3$, we obtain the matrices $$R(0, 2\pi/3, 2\pi/3,0) \text{ and } R(4\pi/3, 0,0,4\pi/3) = R(-2\pi/3, 0,0,-2\pi/3).$$  Consider the element $w\in W_{SO(8)}$ with $w(\theta_1,\theta_2,\theta_3, \theta_4) = -(\theta_2, \theta_1, \theta_4, \theta_3)$.  The one easily sees that $w$ maps the first torus element to the second and $w$ preserves the maximal torus of $Spin(7)$.  By Proposition \ref{spinconjugate}, these two torus elements are conjugate in $Spin(7)$.  Since they are not in $Z(Spin(7)) = \{\pm I\}\subseteq SO(8)$, it follows that this action is not effectively free.

\end{proof}

One can easily verify that each pair listed in Proposition \ref{so7class} descends to an effectively free action on $SO(7)$.  For example, for the pair $(D,E)$, the image of the maximal torus in $SO(7)$ consists of matrices of the form $R(2\theta, 2\theta, 0)$ and $R(2\theta, \theta,\theta)$.  If these matrices are conjugate in $SO(7)$, then either $\theta = 0$, which forces both matrices to be the identity, or $2\theta = 0$.  Substituting this in to the first matrix gives the identity matrix, which then forces the second matrix to be the identity.  Hence, these matrices are conjugate iff they are both the identity, so the action is free.

\

We now classify all biquotients of the form $Spin(7) \bq SU(2)^2$.  Our main tool is Lemma \ref{restest} which asserts that for any $f=(f_1,f_2):SU(2)^2\rightarrow Spin(7)^2$, if $(f_1,f_2)$ defines an effectively free action, then after restricting $f$ to a maximal torus $T^2\subseteq SU(2)^2$ with parameters $\theta$ and $\phi$, that setting $\theta=1$, $\phi=1$, or $\theta=\phi$ must result in a pair of homomorphisms from Proposition \ref{so7class}.

\begin{proposition}  Suppose $(f_1,f_2):SU(2)^2\rightarrow Spin(7)^2$ defines an effectively free action of $SU(2)^2$ on $Spin(7)$ with $f_1$ nontrivial.  Then either $f_2$ is trivial, or, up to interchanging $f_1$ and $f_2$, the pair $(f_1,f_2)$ is a lift of a pair in  Table \ref{table:su2class}.

\end{proposition}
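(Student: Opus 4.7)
The plan is to combine Lemma \ref{restest} with the enumeration of representations from Table \ref{table:so7partition} (for finite-kernel homomorphisms) together with the infinite-kernel homomorphisms, which factor through one of the two projections $SU(2)^2\to SU(2)$ followed by one of the six nontrivial homomorphisms in Table \ref{table:su2spin}. This produces an explicit finite list of candidates for each of $f_1$ and $f_2$, and for each candidate I know the image of the maximal torus $T^2\subseteq SU(2)^2$ with parameters $(\theta,\phi)$ inside $T_{Spin(7)}\subseteq SO(8)$.

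The central filter is Lemma \ref{restest}: restricting $(f_1,f_2)$ to the first factor (set $\phi = 0$), the second factor (set $\theta = 0$), and the diagonal (set $\theta = \phi$) must each yield either an effectively free $SU(2)$ action on $Spin(7)$ or a pair in which one component is trivial. By Proposition \ref{so7class}, the nontrivial effectively free pairs are limited to $(A,B),(A,D),(A,E),(A,F),(C,E),(D,E)$, and this compatibility across all three restrictions simultaneously is extremely restrictive. So the approach is: for each pair of rows drawn from Table \ref{table:so7partition} or an infinite-kernel lift in Table \ref{table:su2spin}, decompose the three restrictions and cross-check them against the allowed list; almost all pairs are eliminated at this stage.

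The main obstacle will be the bookkeeping, since the same $SO(7)$-level homomorphism can lift to inequivalent $Spin(7)$-level homomorphisms, distinguished by their action on the kernel of $\pi:Spin(7)\to SO(7)$; hence the torus image in $SO(8)$ must be tracked exactly rather than merely modulo $\pi$. Care is also needed because the six labels $A,\dots,F$ from Table \ref{table:su2spin} may reappear after rescaling parameters (for instance, on the diagonal), so equivalence of restrictions requires reparametrizing the source $SU(2)$. For each candidate pair that survives the restriction test, I will then verify effective freeness directly via Proposition \ref{spinconjugate}: check that the only $(\theta,\phi)$ for which $f_1(\theta,\phi)$ and $f_2(\theta,\phi)$ are related by an element of $W_{SO(8)}$ preserving $T_{Spin(7)}$ produce the central element $\pm I\in Spin(7)$.

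Finally, the surviving pairs must be matched against the ten $SO(7)$ entries of Table \ref{table:su2class}. Since every homomorphism $SU(2)\to SO(7)$ lifts uniquely to $Spin(7)$ (as $SU(2)$ is simply connected), each $SO(7)$ row determines a well-defined pair of homomorphisms $SU(2)^2\to Spin(7)^2$, and the content of the proposition is that the short list produced above contains precisely these lifts and nothing more. Showing that the count matches exactly will complete the proof; the only nontrivial extra check is that no spurious $Spin(7)$-level pair appears which fails to descend, but this will also be visible from the torus bookkeeping in the previous step.
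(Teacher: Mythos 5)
Your proposal follows essentially the same route as the paper: enumerate the candidate homomorphisms by finite versus infinite kernel using Tables \ref{table:so7partition} and \ref{table:su2spin}, filter pairs via Lemma \ref{restest} by checking the three restrictions $\theta=0$, $\phi=0$, $\theta=\phi$ against the list in Proposition \ref{so7class}, and then verify the survivors directly with Proposition \ref{spinconjugate} (which is genuinely needed, since e.g.\ $(3\phi_{00}+\phi_{11},\ \phi_{00}+\phi_{20}+\phi_{02})$ passes the restriction test but fails at $\theta=2\pi/5$, $\phi=4\pi/5$). This matches the paper's proof in both structure and the key checks.
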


\begin{proof}

A homomorphism $f:U=SU(2)^2\rightarrow G^2$ is nothing but a pair of homomorphism $f_i:U\rightarrow G$.  We break the proof into cases depending on whether or not each $f_i$ has finite kernel or infinite kernel.  If $f_i$ has finite kernel, it is, up to interchanging the two $SU(2)$ factors, the lift of a representation found in Table \ref{table:so7partition}.  On the other hand, $f_i$ has infinite kernel (but is non-trivial), it is given as the lift of a projection to either factor composed with an entry in Table \ref{table:su2spin}.

To facilitate the use of Lemma \ref{restest}, we record in Table \ref{table:restriction}, for each representation in Table \ref{table:so7partition}, the restriction of this representation to the three natural $SU(2)$ subgroups.

\begin{table}[h]

\caption{Restrictions of $SU(2)^2\rightarrow Spin(7)$ to $SU(2)$ subgroups}\label{table:restriction}

\begin{center}

\begin{tabular}{|c||c|c|c|}

\hline

Representation & $\theta = 0$ & $\phi=0$ & $\theta = \phi$\\

\hline

$3\phi_{00} + \phi_{11}$ &  $B$ & $B$ & $A$\\

$\phi_{00} + \phi_{20} + \phi_{02}$ & $A$ & $A$ & $D$\\

$2\phi_{10} + \phi_{02}$ & $A$ & $B$ & $E$\\

$\phi_{11} + \phi_{02}$ & $E$ & $B$ & $D$ \\

\hline

\end{tabular}

\end{center}

\end{table}

We first handle the case where both $f_i$ have finite kernel.  The first two representations in Table \ref{table:restriction} are symmetric in $\theta$ and $\phi$, but the last two are not.  It follows that, in addition to checking all $\binom{4}{2} = 6$ pairs of representations in Table \ref{table:restriction}, that we also must check $(2\phi_{10} + \phi_{02}, \phi_{11} + \phi_{20})$.  With the exception of $(3\phi_{0} + \phi_{11}, \phi_{00} + \phi_{20} + \phi_{02})$, the remaining $6$ pairs have at least one non-effectively free restriction, that is, at least one restriction which is not found in Proposition \ref{so7class}.  In the exceptional case, one easily sees that $\theta = \frac{2\pi}{5} $ and $\phi = \frac{4\pi}{5}$ gives rise to a non-central conjugacy in $Spin(7)$, so the induced action is not effectively free.  This completes the case where both $f_i$ have finite kernel.

So, we assume $f_2$ has infinite kernel.  This implies that one of the restrictions is the trivial representation, while the other two must be the same.  This immediately rules out the case $f_1 = 3\phi_{00} + \phi_{11}$, because the only pair in Proposition \ref{so7class} containing $B$ is $(A,B)$, which then forces the $\theta =\phi$ restriction to be $(A,A)$, so the induced action is not effectively free.

Similarly, if $f_1 = \phi_{00} + \phi_{20} + \phi_{02}$, then $f_2 = E$ is the only possibility which is not ruled out by Lemma \ref{restest}.  In this case, one has $R(\theta+\phi, \theta - \phi, \theta-\phi, \theta+\phi)$ and $R(0,-\theta,\theta,2\theta)$.  If these are conjugate, then $\theta = \pm \phi \pmod{2\pi}$, either case of which gives rise to the pair $(D,E)$.  Hence, by Proposition \ref{so7class}, this action is effectively free.

If $f_1$ is one of the remaining two representations with finite kernel, then the $\phi=0$ restriction forces $f_2$ to either only depend on $\phi$, or $f_2 = A$ with the variable $\theta$.  In the latter case, one easily verifies that both choices of $f_1$ give rise to effectively free biquotients, we now assume $f_2$ depends only on $\phi$.  If $f_1 = 2\phi_{10} + \phi_{02}$, the Proposition \ref{so7class} implies $f_2 = D$, and, if $f_1 = \phi_{11} + \phi_{02}$, then $f_2 = A$.  One easily verifies that both of these give rise to effectively free actions.

\

We have now handled the case where $f_1$ has finite kernel, so we may assume $f_1$ and $f_2$ both have infinite kernel.  The $\phi =\theta$ restriction implies that the only cases which can possibly give rise to effectively free actions are those given in Proposition \ref{so7class}, where we assume $f_1$ only depends on $\theta$, while $f_2$ depends only on $\phi$.  One easily sees that the first five entries give rise to effectively free actions.

For example, for $(C,E)$, the two images of the maximal tori given by $R(3\theta, \theta,\theta,3\theta)$ and $R(0,-\phi , \phi ,2\phi )$.  They are not even conjugate in $SO(8)$, unless $\theta = 0$.

The last entry, $(D,E)$, does not give rise to an effectively free action.  The images of the maximal torus are $R(2\theta,0,0,2\theta)$ and $R(0,-\phi, \phi, 2\phi)$.  Setting $\theta = \pi/2$ and $\phi = \pi$ gives a non-central conjugacy in $Spin(7)$.

\end{proof}

We note that, as a corollary to the proof, for $9$ of the $10$ effectively free biquotients, the map $(f_1,f_2):SU(2)^2\rightarrow Spin(7)^2\subseteq SO(8)^2$ defining the action has  either $(-I,I)$ or $(I,-I)$ in its image, so Proposition \ref{easyhalf} implies each of these descends to a biquotient of the form $SO(7)\bq SU(2)^2$.  The exceptional case is given by the pair $(2\phi_{10} + \phi_{02}, \phi_{00} + 2\phi_{02})$.  As a representation into $SO(7)$, the two images of the maximal tori are $R(\theta, \theta, 2\phi)$ and $R(2\phi, 2\phi, 0)$.  If these are conjugate in $SO(7)$, then either $\theta = 0$ or $2\phi = 0$.  Either case forces the other case to hold, so these matrices are only conjugate when both are the identity.  It follows that this action of $SU(2)^2$ on $SO(7)$ is effectively free, finishing the proof of Theorem \ref{main} in the case of $G = SO(7)$ or $G = Spin(7)$.

\section{\texorpdfstring{$G = SU(4)$ or $G = \mathbf{G}_2\times SU(2)$}{G = SU(4) or G = G2 x SU(2)}}

In this section, we complete the proof of Theorem \ref{main} in the case of $G = SU(4)$ and $G = \mathbf{G}_2\times SU(2)$.

We begin with the case $G = SU(4)$.  We note that there are, up to equivalence, precisely 5 homomorphisms from $SU(2)$ to $SU(4)$, corresponding to the $5$ partitions of $4$.  These are listen in Table \ref{table:su4homo}.  Similarly, up to equivalence, there are precisely $2$ homomorphisms $SU(2)^2\rightarrow G$ with finite kernel: $\phi_{10} + \phi_{01}$ and $\phi_{11}$.  The image of $T_{SU(2)^2}$ is, respectively, $\diag(z,\overline{z},w,\overline{w})$ and $\diag(zw,\overline{z}\overline{w}, z\overline{w}, \overline{z} w)$.

\begin{table}[ht]

\caption{Homomorphisms from $SU(2)$ into $SU(4)$}\label{table:su4homo}

\begin{center}

\begin{tabular}{|c|c|}

\hline

Representation & Image of $T_{SU(2)}$\\

\hline

$4\phi_0$ & \diag(1,1,1,1)\\

$2\phi_0 + \phi_1$ & $\diag(z,\overline{z},1,1)$\\

$\phi_0 + \phi_2$ & $\diag(z^2,1,\overline{z}^2,1)$\\

$\phi_3$ & $\diag(z,z^3,\overline{z},\overline{z}^3)$\\

$2\phi_1$ & $\diag(z,\overline{z},z,\overline{z})$\\

\hline

\end{tabular}

\end{center}

\end{table}

As in the proof of the case $G = SO(7)$, we first classify all effectively free actions of $SU(2)$ on $SU(4)$.  The proof is carried out just as in Proposition \ref{so7class}, which the only change being that two diagonal matrices in $SU(4)$ are conjugate iff the entries are the same, up to order.

\begin{proposition}\label{su4class}

Suppose $f=(f_1,f_2):Sp(1)\rightarrow G^2$ with $f_1$ nontrivial.  Then $f$ induces an effectively free biquotient action of $SU(2)$ on $G$ if and only if either $f_2$ is trivial or, up to interchanging $f_1$ and $f_2$, $(f_1,f_2)$ is equivalent to $$(2\phi_0 + \phi_1, 2\phi_1) \text{ or }(\phi_0 + \phi_2, 2\phi_1).$$

\end{proposition}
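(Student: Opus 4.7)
The strategy mirrors that of Proposition \ref{so7class}, but is substantially simpler because two elements of the diagonal maximal torus $T\subseteq SU(4)$ are conjugate in $SU(4)$ if and only if their diagonal entries agree as multisets. First I would dispose of the homogeneous cases: if $f_2$ is trivial the action is automatically effectively free, and if $f_1=f_2$ is nontrivial then $f_1(z)=f_2(z)$ is itself non-central for generic $z\in T_{SU(2)}$, so the action fails to be effectively free. This reduces the problem to checking the $\binom{4}{2}=6$ unordered pairs of distinct nontrivial homomorphisms drawn from Table \ref{table:su4homo}.

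Next, for each such pair I would compare the two images of $T_{SU(2)}$ as $4$-tuples of diagonal entries, and look for a non-central $z\in T_{SU(2)}$ at which these tuples agree as multisets. Four of the six pairs admit such a non-central coincidence, which I would exhibit explicitly with a small root of unity. For instance, at a primitive cube root of unity $\omega$, the pair $(2\phi_0+\phi_1,\phi_0+\phi_2)$ yields $\{\omega,\overline{\omega},1,1\}=\{\omega^2,1,\omega,1\}$, the pair $(2\phi_0+\phi_1,\phi_3)$ yields $\{\omega,\omega^2,1,1\}=\{\omega,1,\omega^2,1\}$, and the pair $(\phi_0+\phi_2,\phi_3)$ yields $\{\omega^2,1,\omega,1\}=\{\omega,1,\omega^2,1\}$; at $z=i$ the pair $(\phi_3,2\phi_1)$ yields $\{i,-i,-i,i\}=\{i,-i,i,-i\}$. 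In each case the common torus element lies outside $Z(SU(4))$, so by Proposition \ref{free} the induced action is not effectively free.

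The remaining two pairs $(2\phi_0+\phi_1,2\phi_1)$ and $(\phi_0+\phi_2,2\phi_1)$ must then be shown to give effectively free actions. In both cases the image of $2\phi_1$ is $\diag(z,\overline{z},z,\overline{z})$, so for the multisets to coincide the value $1$ must occur with multiplicity at least $2$ in $\{z,\overline{z},z,\overline{z}\}$; this forces $z=1$ and hence both torus elements to equal the identity. By Proposition \ref{free}, the actions are effectively free.

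The main obstacle is not conceptual but rather the bookkeeping of multiset comparisons across the six pairs; unlike the $Spin(7)$ case there is no Weyl group subtlety, since conjugacy in $SU(4)$ is controlled directly by the symmetric group permuting the four diagonal entries.
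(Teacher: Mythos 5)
Your proposal is correct and follows exactly the route the paper intends: it explicitly says the proof is "carried out just as in Proposition \ref{so7class}," with conjugacy in $SU(4)$ reduced to equality of diagonal entries as multisets, and your case-by-case check of the six pairs (with explicit roots of unity witnessing the non-central conjugacies, and the multiplicity-of-$1$ argument for the two free pairs) fills in precisely the computations the paper omits. No gaps.
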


We may now use Lemma \ref{restest} to complete the proof of Theorem \ref{main} in the case of $G = SU(4)$.

\begin{proposition}Table \ref{table:su2class} lists all of the inhomogeneous biquotients of the form $SU(4)\bq SU(2)^2$.

\end{proposition}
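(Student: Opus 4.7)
The strategy mirrors the $G=SO(7)$ case: enumerate all homomorphisms $f=(f_1,f_2):SU(2)^2\to SU(4)^2$, apply Lemma \ref{restest} together with Proposition \ref{su4class} to narrow candidates, and then check surviving pairs directly using the elementary fact that two diagonal matrices in $SU(4)$ are conjugate iff their diagonal entries agree as multisets. The first task is to record the three natural $SU(2)$-restrictions (setting $\theta=0$, $\phi=0$, or $\theta=\phi$) of the two finite-kernel representations. Reading from their diagonal torus images, $\phi_{10}+\phi_{01}$ restricts to $(2\phi_0+\phi_1,\; 2\phi_0+\phi_1,\; 2\phi_1)$, and $\phi_{11}$ restricts to $(2\phi_1,\; 2\phi_1,\; \phi_0+\phi_2)$. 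We then split into three cases by the kernel type of each $f_i$.

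If both $f_i$ have finite kernel, the only pair not ruled out by $f_1=f_2$ is $(\phi_{10}+\phi_{01},\phi_{11})$; its three restricted pairs all appear in Proposition \ref{su4class}, so Lemma \ref{restest} fails to rule it out. However, substituting $z=i$, $w=-1$ into the torus images gives $\diag(i,-i,-1,-1)$ and $\diag(-i,i,-i,i)$, which share the multiset $\{i,i,-i,-i\}$ without being central, so this pair is eliminated. If exactly one $f_i$ has finite kernel, I would enumerate four sub-cases (two choices of the finite-kernel factor, and two choices of the $SU(2)$-projection through which the infinite-kernel factor factors). In each sub-case the $\theta=\phi$ restriction combined with whichever of the $\theta=0$, $\phi=0$ restrictions does not kill the infinite-kernel factor forces incompatible constraints on the latter via Proposition \ref{su4class}: specifically, one constraint demands it equal $2\phi_1$ while the other demands it equal $2\phi_0+\phi_1$ or $\phi_0+\phi_2$.

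Finally, if both $f_i$ have infinite kernel they must factor through different $SU(2)$-projections of $U$ (else one factor of $U$ would act trivially, violating effective freeness), so after swapping factors of $U$, $f_1$ depends only on $\theta$ and $f_2$ only on $\phi$. The $\theta=\phi$ restriction then forces $(f_1,f_2)$ to be, up to order, one of the two inhomogeneous effectively-free pairs from Proposition \ref{su4class}, namely $(2\phi_1,\;2\phi_0+\phi_1)$ or $(2\phi_1,\;\phi_0+\phi_2)$. In each case a direct multiset comparison of $\diag(z,\bar z,z,\bar z)$ with $\diag(w,\bar w,1,1)$ or $\diag(w^2,1,\bar w^2,1)$ shows conjugacy forces $z=w=1$ (or a central element), so both actions are effectively free. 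These two actions correspond precisely to the $\diag(A,A)$ left image paired with $\diag(B,I_2)$ and $\diag(\pi(B),1)$ right images of Table \ref{table:su2class}.

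The main obstacle is the middle case (exactly one factor of finite kernel): nothing is conceptually difficult, but the bookkeeping is slightly delicate, as the presence or absence of the finite-kernel factor in each of the three restrictions forces different constraints, and one must track carefully which projection the infinite-kernel factor uses. All remaining arithmetic reduces to comparing multisets of four roots of unity, which is routine.
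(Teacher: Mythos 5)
Your overall strategy matches the paper's, and the final two cases (one finite-kernel factor, and two infinite-kernel factors) are handled correctly. However, there is a genuine error in the step that eliminates the pair $(\phi_{10}+\phi_{01},\phi_{11})$, which is the one surviving candidate that must be shown \emph{not} to act effectively freely. With $z=i$, $w=-1$ the two torus images are $\diag(i,-i,-1,-1)$ and $\diag(-i,i,-i,i)$, whose multisets of eigenvalues are $\{i,-i,-1,-1\}$ and $\{i,i,-i,-i\}$ respectively. These are \emph{not} equal (the first contains $-1$ twice, the second does not contain $-1$ at all), so the two matrices are not conjugate in $SU(4)$ and your substitution does not produce a non-central conjugacy. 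As written, you have not eliminated this pair, and the classification would then wrongly admit a third inhomogeneous biquotient.

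The pair does in fact fail to be effectively free, but a different witness is needed. Take $w=z^{3}$ with $z$ a nontrivial fifth root of unity: the left image becomes $\diag(z,z^{4},z^{3},z^{2})$ and the right image becomes $\diag(z^{4},z,z^{3},z^{2})$, which have identical eigenvalue multisets $\{z,z^{2},z^{3},z^{4}\}$ yet are not central in $SU(4)$. Inserting this (or any equivalent witness) repairs the argument; everything else in your write-up, including the restriction bookkeeping and the two freeness verifications, is sound.
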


\begin{proof}

As in the proof of Theorem \ref{main} in the case of $G = SO(7)$, we only consider $(f_i, f_j)$ with $f_i\neq f_j$, and neither $f_i$ the trivial map.  Taking into account the symmetry of interchanging $z$ and $w$, we reduce the number to $15$ pairs to check.  Finally, Lemma \ref{restest} reduces this number down to $3$, of which only one does not give rise to an effectively free action.  We now provide the computations for these three cases.

Consider the action induced by $(\phi_{10} + \phi_{01}, \phi_{11})$, with maximal torus given by $$(\diag(z, \overline{z}, w, \overline{w}), \diag(zw, \overline{zw}, z\overline{w}, \overline{z}w)).$$  We set $w = z^3$ and choose $z$ to be a nontrivial $5$th root of unity.  Then the image on the left is $(z,z^4, z^3, z^2)$ while the image on the right is $(z^4, z, z^3, z^2)$.  These two matrices are clearly conjugate in $SU(4)$, but neither is an element of $Z(SU(4)) = \{\pm I\}$.

On the other hand, the action given by $(\diag(z, \overline{z}, 1, 1), \diag(w, \overline{w}, w, \overline{w}))$ is effectively free because if two such matrices are conjugate, we must have $w=1$, which then forces $z=1$.

Finally, the action given by $(\diag(z, \overline{z}, z, \overline{z}), \diag(w^2, 1, \overline{w}^2, 1))$ is effectively free since to be conjugate we require $z=1$, which then forces $w=1$.

\end{proof}

\

We now complete the proof of Theorem \ref{main} in the last case case, when $G = \mathbf{G}_2\times SU(2)$.

As mentioned in Section \ref{sec:background}, up to conjugacy, the identity map $SU(2)\rightarrow SU(2)$ is the unique non-trivial homomorphism.  It follows that, up to equivalence, there are precisely two non-trivial biquotient actions of $SU(2)$ on itself: left multiplication, and conjugation.  In particular, for any $U$ action on $SU(2)$, at least one $SU(2)$ factor of $U$ acts trivially.

The left multiplication action is clearly transitive, and thus, gives rise to a non-reduced biquotient.  If $SU(2)$ acts either trivially or by conjugation on itself, then $I$ is a fixed point of the action.  It follows that if neither factor of $U = SU(2)^2$ acts transitively on the $SU(2)$ factor of $G$, that, in fact, $U$ must act freely on $\mathbf{G}_2$.

As shown in \cite{KZ}, there is a unique effectively free biquotient action of $U$ on $\mathbf{G}_2$, giving rise to the exceptional symmetric space $\mathbf{G}_2/SO(4)$.  It follows that there are precisely three non-reduced biquotients of the form $\mathbf{G}_2\times SU(2)\bq U$:  either $U$ acts trivially on the $SU(2)$ factor of $G$ (giving the homogeneous space $(\mathbf{G}_2/SO(4))\times SU(2)$), or one of the two $SU(2)$ factors of $U$ acts by conjugation on the $SU(2)$ factor of $G$, giving the last two entries.  We note that, as mentioned in \cite{KZ}, the two normal $SU(2)$s in $SO(4)$ have different Dynkin indices in $\mathbf{G_2}$.  It follows that these two actions are not equivalent.





\bibliographystyle{plain}

\end{document}